\documentclass{amsart}
\usepackage{cite}
\usepackage{amsmath}
\usepackage{amsthm}
\usepackage[all]{xy}
\usepackage{amssymb}
\usepackage{mathrsfs}
\usepackage{etoolbox}
\usepackage{fullpage}
%\usepackage{hyperref}

% Over-full v-boxes are due to the \v{c} in author's name
\vfuzz2pt % Don't report small over-full v-boxes

% THEOREM Environments ------------------------------------
\newtheorem{thm}{Theorem}
[section]
\newtheorem{cor}[thm]{Corollary}

\newtheorem{lem}[thm]{Lemma}
\newtheorem{prop}[thm]{Proposition}

\theoremstyle{definition}
\newtheorem{rem}[thm]{Remark}
\newtheorem{fct}[thm]{Fact}
\newtheorem{defn}[thm]{Definition}

\newtheorem{rmk}[thm]{Remark}

\numberwithin{equation}{subsection}

\def \ta {\tau_{\mathcal{D}/\Delta}}
\def \D {\Delta}
\def \DD {\mathcal D}

% MATH ----------------------------------------------------------------------------------------------------------------------
\newcommand{\gen}[1]{\left\langle#1\right\rangle}
%------------------------------------------------------------------------------------------------------------------------------
\usepackage{color}
\addtolength{\topmargin}{-0.2in}
%\addtolength{\textwidth}{-0.5in}
%\addtolength{\textheight}{0.5in}
%\def\subjclass#1{{\renewcommand{\thefootnote}{}%
%\footnote{\emph{2010 Mathematics Subject Classification.} #1}}}
%---------------------------------------------------------------------------------------------------------------------------------
\begin{document}

\title[On parameterized differential Galois extensions]{On parameterized differential Galois extensions}

\author{Omar Le\'on S\'anchez}
\address{Omar Le\'on S\'anchez\\
McMaster University\\
Department of Mathematics and Statistics\\
1280 Main Street West\\
Hamilton, Ontario \  L8S 4L8\\
Canada}
\email{oleonsan@math.mcmaster.ca}

\author{Joel Nagloo}
\address{Joel Nagloo\\
Graduate Center\\
Mathematics\\
365 Fifth Avenue, New York\\
NY 10016-4309}
\thanks{Joel Nagloo was supported by NSF grant CCF-0952591.}
\email{jnagloo@gc.cuny.edu}

\date{\today}

\pagestyle{plain}
\subjclass[2010]{03C60, 12H05}
\keywords{parameterized strongly normal extensions, model theory}

\begin{abstract}
We prove some existence results on parameterized strongly normal extensions for logarithmic equations. We generalize a result in [Wibmer, {\em Existence of $\partial$-parameterized Picard-Vessiot extensions over fields with algebraically closed constants}, J. Algebra, 361, 2012]. We also consider an extension of the results in [Kamensky and Pillay, {\em Interpretations and differential Galois extensions}, Preprint 2014] from the ODE case to the parameterized PDE case. More precisely, we show that if $\DD$ and $\D$ are two distinguished sets of derivations and $(K^{\DD},\D)$ is existentially closed in $(K,\D)$, where $K$ is a $\DD\cup\D$-field of characteristic zero, then every (parameterized) logarithmic equation over $K$ has a parameterized strongly normal extension. 

%on differential Galois extensions for ordinary differential algebraic equations to the case of parameterized partial %differential algebraic equations. More precisely, we show that if $\DD$ and $\D$ are two disjoint sets of %distinguished derivations and $K^{\DD}$ is existentially closed in $K$, where $K$ is a $\DD\cup\D$-field of %characteristic zero, then every logarithmic differential equation w.r.t. $\DD/\D$ on a parameterized D-group over %$K^{\DD}$ has a \emph{parameterized strongly normal extension}. 
\end{abstract}

\maketitle

%-----------------------------------------------------------------------------------------------------------------------------------------
\section{Introduction}

Let $\Pi=\DD\cup\D=\{D_1,\dots,D_r\}\cup \{\delta_{1},\dots,\delta_{m-r}\}$ be a set of commuting derivations, with $m\geq r>0$, and $K$ be a $\Pi$-field of characteristic zero. Consider the (parameterized) system of homogeneous linear differential equations
\[D_1Y=A_1Y, \; \dots,\;  D_r Y=A_rY, \quad \text{ with $Y$ ranging in GL$_n$},\tag{$\star$}\]
where the $A_i$'s are  $n\times n$ matrices with entries from the differential field $K$ satisfying the usual integrability condition
$$D_i A_j -D_jA_i=[A_i,A_j],  \quad \text{ for } i,j=1,\dots,r.$$ 
Recall that a parameterized Picard-Vessiot (PPV) extension of $K$ for $(\star)$ is a $\Pi$-field extension $L$ of $K$ such that
\begin{enumerate}
\item $L$ is generated over $K$ by the entries (and all $\Pi$-derivatives) of a matrix solution $Z\in GL_n(L)$ of $(\star)$; in other words, $L=K\gen{Z}_\Pi=K\gen{Z}_\D$, and
\item $L^\DD=K^\DD$, that is the field of $\DD$-constants of $L$ is the same as the field of $\DD$-constants of $K$.
\end{enumerate}

For an example, take $K=(\mathbb{C}(x,t),\{\frac{\partial}{\partial x},\frac{\partial}{\partial t}\})$, where we think of $\DD$ as $\{\frac{\partial}{\partial x}\}$ and the parametric derivations $\D$ as $\{\frac{\partial}{\partial t}\}$. Let $(\star)$ be
\begin{equation}\label{useno}
\frac{\partial y}{\partial x}=\frac{t}{x}y.
\end{equation}
Clearly, $y=x^t$ is a solution and as $\frac{\partial}{\partial t}(x^t)= x^t\cdot log\:x$, one is interested in the field $L=K(x^t,log\:x)$. It turns out (see \cite[Example 3.1]{CaSi}) that $L$ is indeed a PPV extension of $K$ for equation (\ref{useno}).

Parameterized Picard-Vessiot extensions were introduced in \cite{CaSi} by Cassidy and Singer as a fundamental tool for studying parametric equations such as equation $(\star)$. In particular, they capture valuable information about the algebraic relations that exist among a set of solutions as well as their $\D$-algebraic relations (where $\D$ is the set of parametric derivations). PPV extensions have attracted much attention in recent years and we direct the reader to \cite{Mitschi} for some applications of the parameterized Picard-Vessiot theory. It should be noted that PPV extensions do not always exist; for example, consider the differential field $(\mathbb{R}\gen{\alpha},\frac{d}{dx})$, where $\alpha$ is a non constant solution of the equation $(\frac{dy}{dx})^2+4y^2+1=0$ in a differential closure of $(\mathbb R(x),\frac{d}{dx})$. Take $(\star)$ to be the linear differential equation
\[\frac{d^2y}{dx^2}+y=0.\] 
Seidenberg (see \cite[\S3]{Seidenberg}) proved that there are no Picard-Vessiot extensions of $K=\mathbb{R}\gen{\alpha}$ for this equation.

It has been known for quite some time that to get a general existence result, one needs to impose additional assumptions on $K^\DD$. In \cite{CaSi}, Cassidy and Singer showed that if $(K^\DD,\D)$ is $\D$-closed, then the existence of a PPV extension of $K$ is guaranteed. This was later improved in \cite{GilletAl} by Gillet et al.  where they show that the assumption can be weaken to $(K^\DD,\D)$ being existentially closed in $(K,\D)$. Examples of the latter occur for instance when $K$ is formally real and $(K^\DD,\D)$ is a real closed ordered differential field; more precisely, $(K^\DD, \D)$ is model of the theory $RCF\cup UC_{m-r}$ introduced by Tressl in \cite[\S8]{Tressl}. A similar observation can be made for the $p$-adic case now using the theory $pC_d\cup UC_{m-r}$. Another important result is due to Wibmer \cite{Wibmer}, who showed that in the case of a single parameter (i.e., $\D=\{\delta\}$), a PPV extension exists whenever $K^\DD$ is algebraically closed.

The most recent result can be found in \cite{MoshePillay} and concerns the nonlinear case with no parametric derivations (i.e, $\D=\emptyset$). In that paper, using model theoretic techniques, Kamensky and Pillay give a new proof of the nonparametric part of the above mentioned result of Gillet et al. \cite{GilletAl}. Moreover, their results are presented in the general context of logarithmic equations and strongly normal extensions (a generalization of Picard-Vessiot extensions to the nonlinear setting).

The aim of this current paper is to extend the work of Kamensky and Pillay to the parametric case. We work in the more general context of parameterized logarithmic equations (not necessarily linear) and parameterized strongly normal equations. In the process we obtain a new proof of the result of Gillet et al. about the existence of PPV extensions when $(K^\DD,\D)$ is existentially closed in $(K,\D)$. We also discuss ways in which one can extend the main result of Wibmer \cite{Wibmer} when there are more than one parameters (i.e., $|\D|>1$).

The paper is organized as follows. In Section 2, we give a quick review of the notion of parameterized D-varieties and D-groups. We then, in Section 3, explain what we mean by parameterized logarithmic equations and parameterized strongly normal (PSN) extensions and study their basic properties such as their Galois group of $\Pi$-automorphisms and the Galois correspondence. Section 4 and 5 is where our main results are proved. We first develop a quantifier elimination result for ``parameterized D-groups" and then use the latter to prove our main result about existence of PSN extensions. 

%Finally in Section 6 we say a few words about the Galois group of PSN extensions.

\subsection*{Acknowledgements}
The second author would like to thank the Logic group at McMaster University for their generous hospitality during his stay in December 2014 when part of the work presented here was completed.

%-----------------------------------------------------------------------------------------------------------------------------------------

\section{Preliminaries on parameterized D-varieties and D-groups}

In this section we recall the notions of parameterized prolongations of differential algebraic varieties and parameterized D-varieties with respect to a fixed partition of the distinguished derivations. We refer the reader to \cite[\S3]{Omar1} for a more detailed discussion (there the terminology \emph{relative} is used instead of parameterized). 

Throughout $(\mathcal{U},\Pi)$ will denote a sufficiently large saturated model of $DCF_{0,m}$; that is, $(\mathcal U,\Pi)$ will be play the role of our universal differential field (of characteristic zero) with $m$ commuting derivations for differential algebraic geometry. We also fix a ground (small) $\Pi$-subfield $K<\mathcal{U}$.  We consider a partition $\mathcal{D}\cup\Delta$ of $\Pi$, where $\mathcal{D}=\{D_1,\ldots,D_r\}$, $r>0$, and $\Delta=\{\delta_1,\ldots,\delta_{m-r}\}$. For any $\Pi$-subfield $F\leq \mathcal U$ we let 
$$F^{\mathcal{D}}:=\{a\in F: Da=0 \text{ for all } D\in \mathcal D\}$$
be the $\mathcal{D}$-constants of $F$. Recall that, by quantifier elimination of $DCF_{0,m}$, any $F$-definable set is a finite boolean combination of $\Pi$-algebraic varieties ($\subseteq\mathcal U^n$ for some $n$) over $F$. 

An important fact about the $\mathcal{D}$-constants that we will use several times in this paper is the following:
\begin{fct}\label{ConstFact} \
\begin{enumerate}
\item $(\mathcal{U}^{\mathcal{D}},\D)$ is a differentially closed $\Delta$-field.
\item Suppose $F$ is a  $\Pi$-subfield of $\mathcal U$. The $F$-definable subsets of Cartesian powers of $\mathcal{U}^{\mathcal{D}}$ in the structure $(\mathcal U,\Pi)$ are precisely the $F^{\mathcal{D}}$-definable sets in the structure $(\mathcal{U}^{\mathcal{D}},\Delta)$.
\end{enumerate}
\end{fct}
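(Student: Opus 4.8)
The plan is to establish the two parts separately, reducing each to quantifier elimination in $DCF_{0,m}$ together with a little differential algebra. For part~(1) I would check directly that $(\mathcal{U}^{\DD},\D)$ is existentially closed among $\D$-fields of characteristic zero with $m-r$ commuting derivations; since $DCF_{0,m-r}$ is the model companion of that class, this is exactly the assertion that $(\mathcal{U}^{\DD},\D)\models DCF_{0,m-r}$. So I would fix a $\D$-field extension $L$ of $\mathcal{U}^{\DD}$ and a finite system $\phi$ of $\D$-polynomial equations and inequations over $\mathcal{U}^{\DD}$ with a solution $\bar a\in L^{n}$, and produce one in $\mathcal{U}^{\DD}$. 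The crucial point is that $\mathcal{U}^{\DD}$ is algebraically closed as a field: any root in $\mathcal{U}$ (which is itself algebraically closed) of a polynomial over $\mathcal{U}^{\DD}$ is itself a $\DD$-constant, as one sees by applying each $D_{i}$ to its minimal polynomial and invoking separability. Hence $R:=\mathcal{U}\otimes_{\mathcal{U}^{\DD}}L$ is a domain. Extending each $D_{i}$ to act trivially on $L$ and each $\delta_{j}$ componentwise makes $M:=\operatorname{Frac}(R)$ a $\Pi$-field extension of $(\mathcal{U},\Pi)$ in which $\bar a$ still solves $\phi$ and, in addition, satisfies $D_{i}a_{k}=0$ for all $i,k$. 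Embedding the $\Pi$-field $M$ into a model $\widetilde{\mathcal{U}}\models DCF_{0,m}$, so that $\mathcal{U}\preceq\widetilde{\mathcal{U}}$ by model completeness, the sentence $\exists\bar x\,\bigl(\phi(\bar x)\wedge\bigwedge_{i,k}D_{i}x_{k}=0\bigr)$ with parameters from $\mathcal{U}^{\DD}$ holds in $\widetilde{\mathcal{U}}$, hence in $\mathcal{U}$, yielding the required solution of $\phi$ inside $\mathcal{U}^{\DD}$.

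For part~(2), the inclusion from right to left is immediate: the $\D$-field structure on $\mathcal{U}^{\DD}$ is a reduct of the $\Pi$-structure restricted to the $\emptyset$-definable set $\mathcal{U}^{\DD}$, any quantifiers can be relativized to $\mathcal{U}^{\DD}$, and $F^{\DD}\subseteq F$. For the converse, let $S\subseteq(\mathcal{U}^{\DD})^{n}$ be $F$-definable in $(\mathcal{U},\Pi)$ and let $e$ be a canonical parameter for $S$, using elimination of imaginaries for $DCF_{0,m}$. Since $S$ is $F$-definable, $e\in\operatorname{dcl}(F)=F$, a $\Pi$-subfield being definably closed. On the other hand, every $\sigma\in\operatorname{Aut}(\mathcal{U}/\mathcal{U}^{\DD})$ fixes $S$ setwise, because $S\subseteq(\mathcal{U}^{\DD})^{n}$ and $\sigma$ fixes $\mathcal{U}^{\DD}$ pointwise; hence $\sigma(e)=e$, so $e\in\operatorname{dcl}(\mathcal{U}^{\DD})=\mathcal{U}^{\DD}$. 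Therefore $e\in F\cap\mathcal{U}^{\DD}=F^{\DD}$, that is, $S$ is already $F^{\DD}$-definable in $(\mathcal{U},\Pi)$. By quantifier elimination $S$ is then cut out by $\Pi$-polynomial conditions over $F^{\DD}$, and restricting the variables to range over $\mathcal{U}^{\DD}$, where every $D_{i}$-derivative vanishes, converts these into $\D$-polynomial conditions over $F^{\DD}$; so $S$ is $F^{\DD}$-definable in $(\mathcal{U}^{\DD},\D)$.

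The step I expect to be the main obstacle is the construction in part~(1): one has to be careful that $\mathcal{U}\otimes_{\mathcal{U}^{\DD}}L$ really is an integral domain carrying commuting derivations that extend both given structures, and --- the genuinely important point --- that the auxiliary equations $D_{i}x_{k}=0$ survive the passage into a model of $DCF_{0,m}$, since this is precisely what forces the transferred solution to land in the constants rather than merely somewhere in $\mathcal{U}$. The remaining ingredients --- algebraic closedness of the $\DD$-constants, model completeness and quantifier and imaginary elimination for $DCF_{0,m}$, and definable closedness of $\Pi$-subfields --- are standard, and I would quote rather than reprove them.
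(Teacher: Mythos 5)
The paper states this as a \emph{Fact} and supplies no proof of its own --- it is invoked as a standard result (part (1) is classical, and part (2) is the usual stable embeddedness of the $\mathcal{D}$-constants) --- so there is no in-paper argument to compare against; your write-up simply supplies what the paper omits, and it is correct. For (1), the chain of reductions is sound: the theory of $\Delta$-fields of characteristic zero is inductive, so membership in $DCF_{0,m-r}$ is equivalent to existential closedness; algebraic closedness of $\mathcal{U}^{\mathcal{D}}$ (via the minimal-polynomial/separability argument) makes $\mathcal{U}\otimes_{\mathcal{U}^{\mathcal{D}}}L$ a domain; the derivations $D_i\otimes 0$ and the product rule for the $\delta_j$ are well defined over $\mathcal{U}^{\mathcal{D}}$ precisely because $\mathcal{U}^{\mathcal{D}}$ is a $\Pi$-subfield on which the $D_i$ vanish, and they commute; and carrying the conjuncts $D_i x_k=0$ into $\widetilde{\mathcal{U}}$ before applying model completeness of $DCF_{0,m}$ is exactly what forces the witness back in $\mathcal{U}$ to lie in the constants. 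For (2), the canonical-parameter argument correctly places the code in $F\cap\operatorname{dcl}(\mathcal{U}^{\mathcal{D}})=F^{\mathcal{D}}$ (both $\operatorname{dcl}(F)=F$ and $\operatorname{dcl}(\mathcal{U}^{\mathcal{D}})=\mathcal{U}^{\mathcal{D}}$ hold because these are $\Pi$-subfields in characteristic zero), and quantifier elimination plus the vanishing of all $D_i$-derivatives on $(\mathcal{U}^{\mathcal{D}})^n$ converts the resulting quantifier-free $\Pi$-formula into a $\Delta$-formula over $F^{\mathcal{D}}$, as required.
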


Now, let $x=(x_1,\dots,x_n)$ and $u=(u_1,\dots,u_n)$ be $n$-tuples of $\Delta$-indeterminates. We denote by $K\{x\}_\D$ the $\D$-ring of $\D$-polynomials over $K$ and by $\Theta_{\Delta}$ the set of $\Delta$-derivatives; that is, 
$$\Theta_{\Delta}=\{\delta_1^{e_1}\cdots \delta_{m-r}^{e_{m-r}}:\, e_i\geq0\}.$$
For each $D\in\DD$ and $f\in K\{x\}_{\Delta}$ we have a $\Delta$-polynomial $d_{D/\Delta}f\in K\{x,u\}_{\Delta}$ given by
\[d_{D/\Delta}f(x,u)=\sum_{\theta\in\Theta_{\Delta}, j\leq n}\frac{\partial f}{\partial (\theta x_j)}(x)\theta u_j+f^{D}(x),\]
where the $\Delta$-polynomial $f^{D}$ is obtained by applying $D$ to the coefficients of $f$. 
%The map $d_{D/\Delta}$ extends naturally to all $\D$-rational functions over $K$ using the quotient rule. 

\begin{defn} The parameterized prolongation, $\tau_{\mathcal{D}/\Delta}V\subseteq \mathcal U^{n(r+1)}$, of an affine $\D$-algebraic variety $V\subseteq \mathcal U^n$ over $K$ is defined as the fibered product
\[\tau_{\mathcal{D}/\Delta}V=\tau_{D_1/\Delta}V\times_V\cdots\times_V\tau_{D_r/\Delta}V\]
where $\tau_{D_i/\Delta}V\subseteq U^{2n}$, $i=1,\dots,r$, is the affine $\Delta$-algebraic variety defined by 
\[f(x)=0 \text{ and } d_{D_i/\Delta}f(x,y)=0,\]
for all $f\in I_\Delta(V/K):=\{f\in K\{x\}_\Delta: f(V)=0\}$. We equip $\ta V$ with its canonical projection $\pi:\ta V\to V$ to the $x$-coordinate. 
\end{defn}

More generally, for an arbitrary $\D$-algebraic variety (i.e., not necessarily affine), the \emph{parameterized prolongation} $\ta V$ is defined by piecing together the prolongations of a (finite) affine cover of $V$ (see \cite[\S2.3]{Omar2} for details). For the basic properties of $\tau_{\mathcal{D}/\Delta}$ we refer the reader to \cite[\S3]{Omar1}. For instance, $\ta$ is a (covariant) functor from the category of $\D$-algebraic varieties to itself, it preserves differential fields of definition, and has the characteristic property that if $v\in V$ then $\nabla_\DD v:=(v,D_1v,\dots,D_rv)\in \ta V$. Moreover, for each $D\in\DD$, $\pi:\tau_{D/\D} V\to V$ is a torsor under the $\Delta$-tangent bundle $\rho:T_\Delta V\to V$ (where the fibrewise action is translation), and if $V$ is defined over $K^{D}$ then $\tau_{D/\D} V=T_\Delta V$.

\begin{rem}
When $\mathcal D=\{D\}$ and $\Delta=\emptyset$, the parameterized prolongation $\ta V$ is nothing more than the usual prolongation $\tau V$ used in ordinary differential algebraic geometry \cite{PiPi}. In this case, whenever $V$ is defined over the constants we recover the tangent bundle of $V$.
\end{rem}

By a \emph{parameterized D-variety} defined over $K$ we mean a pair $(V,s)$ where $V$ is a $\Delta$-algebraic variety and $s$ is a $\Delta$-section of $\tau_{\mathcal{D}/\Delta}V\rightarrow V$ (both defined over $K$) satisfying the following integrability condition: for each $v\in V$,
\[d_{D_i/\Delta}s_j(v,s_i(v))= d_{D_j/\Delta}s_i(v,v_j(x)), \: \text{ for } i,j=1,\ldots,r,\]
where $s = (\operatorname{Id},s_1,...,s_r)$ are local coordinates for $s$ in an affine chart containing the point $v$. 

By a \emph{parameterized $D$-subvariety} of $(V,s)$ we mean a $\Delta$-subvariety $W$ of $V$ such that $s(W)\subset \ta W$. A D-morphism of parameterized D-varieties $(V,s)$ and $(V',s')$ is a $\Delta$-morphism $f:V\to W$ such the following diagram commutes
$$\xymatrix{
\ta V \ar[rr]^{\ta f}&&\ta V'\\
V \ar[u]^{s}\ar[rr]^{f}&&V'\ar[u]_{s'}
}$$
This yields a category of parameterized D-varieties with D-morphisms. Since $\ta$ commutes with products, this category has products and thus, given a $\Delta$-algebraic group $G$, the parameterized prolongation $\ta G$ also has the structure of a $\Delta$-algebraic group (see \cite[\S4]{Omar1}). Hence, we can talk about group objects in this category. These are called \emph{parameterized D-groups} and they are precisely those parameterized D-varieties where the underlying $\Delta$-variety is a $\Delta$-algebraic group and the section is a group homomorphism. A \emph{parameterized D-subgroup} is defined in the natural way. 

The set of \emph{sharp points} of $(V,s)$, denoted by $(V,s)^\sharp$ or simply $V^\sharp$ when $s$ is understood, is the $\Pi$-algebraic subvariety of $V$ given by
\[ V^{\sharp}=\{v\in V:s(v)=\nabla_\DD v\}.\]
For an arbitrary subset $A\subseteq V$, we let $A^\sharp:=A\cap V^\sharp$.

\begin{lem} \label{onsubvar}\
\begin{enumerate}
\item Every irreducible $\Delta$-component of a parameterized D-variety is a parameterized D-subvariety.
\item A $\Delta$-subvariety $W$ of a parameterized D-variety is a parameterized D-subvariety if and only if $W^\sharp$ is $\Delta$-dense in $W$. Moreover, the $\sharp$-points functor establishes a 1:1 correspondence between parameterized D-subvarieties of V and $\Pi$-algebraic subvarieties of $V^\sharp$.
\item Intersections and finite unions of parameterized D-subvarieties are again parameterized D-subvarieties. 

\end{enumerate}
\end{lem}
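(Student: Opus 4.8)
The plan is to reduce everything to the $\sharp$-points correspondence of part (2), together with standard facts about $\Pi$-algebraic subvarieties, and to prove (1) and (2) first since (3) will follow formally.

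For part (1), let $(V,s)$ be a parameterized D-variety and let $W$ be an irreducible $\Delta$-component of $V$. First I would observe that the integrability condition is encoded in the requirement $s(V)\subseteq \ta V$ together with the defining equations of $\ta V$, so that $s$ restricted to $W$ is a $\Delta$-morphism $W\to \ta V$; the content to be verified is that $s(W)\subseteq \ta W$, i.e. that $s$ actually lands in the prolongation of the smaller variety. The key point is that $\ta$ is a functor preserving $\Delta$-fields of definition and that $\tau_{D_i/\Delta}$ is, fibrewise over $W$, a torsor under $T_\Delta W$; thus $\dim \ta W = \dim W + \dim T_\Delta W$ generically, and $\ta W$ is precisely the Zariski-type $\Delta$-closure inside $\ta V$ of $\ta$ applied to a dense open of $W$. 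Since $W$ is an irreducible component, a generic point $v$ of $W$ over $K$ is not on any other component, so $\nabla_\DD v$ makes sense and $s(v)$ lies over $v$; because $s$ is defined over $K$ and $v$ is generic, $s(v)$ specializes consistently and must lie in $\ta W$ — concretely, $s(W)$ is an irreducible $\Delta$-variety of the same dimension as $W$ mapping dominantly to $W$, hence it is contained in the unique component of $\ta V\cap\pi^{-1}(W)$ that dominates $W$, which is $\ta W$. I would spell this out using local affine coordinates and the explicit formula for $d_{D_i/\Delta}f$.

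For part (2), the equivalence "$W$ is a parameterized D-subvariety $\iff$ $W^\sharp$ is $\Delta$-dense in $W$" is the heart of the matter, and this is where I expect the main obstacle. One direction is soft: if $s(W)\subseteq \ta W$, then $(W,s|_W)$ is itself a parameterized D-variety, and one invokes (the appropriate relative version of) the basic density theorem for D-varieties — the analogue of the Kolchin-density of sharp points in ordinary D-varieties — to conclude $W^\sharp$ is $\Delta$-dense in $W$; this is presumably available from \cite{Omar1} and I would cite it, the point being that over the $\Delta$-differentially closed field $(\mathcal U^\DD,\D)$ (Fact~\ref{ConstFact}) the fibres of $\tau_{D/\Delta}$ carry enough solutions. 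For the converse, suppose $W^\sharp$ is $\Delta$-dense in $W$. For every $w\in W^\sharp$ we have $s(w)=\nabla_\DD w\in\ta W$ by definition of the prolongation (since $w\in W$ and prolongations contain $\nabla_\DD$ of their points). Thus $s$ maps the $\Delta$-dense subset $W^\sharp$ into $\ta W$; since $\ta W$ is $\Delta$-closed in $\ta V$ and $s$ is a $\Delta$-morphism, $s(W)\subseteq s(\overline{W^\sharp}^{\,\Delta})\subseteq \overline{s(W^\sharp)}^{\,\Delta}\subseteq \ta W$, as required. For the "moreover" clause, the functor $W\mapsto W^\sharp$ sends parameterized D-subvarieties of $V$ to $\Pi$-algebraic subvarieties of $V^\sharp$; injectivity follows from $\Delta$-density of $W^\sharp$ in $W$ (two such $W$ with the same $\sharp$-points have the same $\Delta$-closure), and surjectivity follows by taking, for a $\Pi$-closed $X\subseteq V^\sharp$, its $\Delta$-Zariski closure $W=\overline{X}^{\,\Delta}$ and checking $W$ is a parameterized D-subvariety (which follows from the converse direction just proved, since $X\subseteq W^\sharp$ is $\Delta$-dense) and that $W^\sharp=X$ (one inclusion is clear; the other uses that $X$ is already $\Pi$-closed and $\Delta$-dense in $W$, forcing $W^\sharp\subseteq X$ by a dimension/irreducibility argument component by component).

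Finally, part (3) is formal given (2): if $W_1,W_2$ are parameterized D-subvarieties of $(V,s)$, then $W_1^\sharp$ and $W_2^\sharp$ are $\Pi$-algebraic subvarieties of $V^\sharp$, hence so are $W_1^\sharp\cap W_2^\sharp$ and $W_1^\sharp\cup W_2^\sharp$; pulling back through the 1:1 correspondence of (2), the corresponding $\Delta$-subvarieties are $W_1\cap W_2$ (respectively the $\Delta$-closure of $W_1\cup W_2$, which for a finite union is just $W_1\cup W_2$ since each $W_i$ is already $\Delta$-closed) and these are parameterized D-subvarieties. One should double-check that the correspondence of (2) is compatible with intersection and union — intersection because $(W_1\cap W_2)^\sharp=W_1^\sharp\cap W_2^\sharp$ directly from the definition $A^\sharp=A\cap V^\sharp$, and union similarly — so no further work is needed. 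The only genuinely delicate ingredient, and the step I would allocate the most care to, is the density statement underlying the easy direction of (2), i.e. ensuring the relative analogue of Kolchin density of sharp points is correctly quoted and applies in the PDE-with-parameters setting.
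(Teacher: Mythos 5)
Your parts (1) and (2) are essentially sound and close to the paper. For (1) your argument is an elaborate version of what the paper does: the paper simply quotes that $\ta W$ and $\ta V$ agree over a nonempty $\Delta$-open subset $Q$ of an irreducible component $W$, so $s(Q)\subseteq \ta W$ and $\Delta$-density of $Q$ in $W$ finishes it; your dimension/dominance detour lands in the same place, but the clean statement to invoke is the agreement of the prolongations over the generic locus, not uniqueness of a dominating component of $\ta V\cap\pi^{-1}(W)$. For (2) the paper cites the result outright; your converse direction ($W^\sharp$ $\Delta$-dense $\Rightarrow$ D-subvariety, via $s(w)=\nabla_{\mathcal D}w\in\ta W$ on $W^\sharp$ and closedness of $\ta W$) is exactly right, and deferring the density of sharp points to the literature is acceptable, though your surjectivity argument for the correspondence (``dimension/irreducibility argument'') is too vague to count as a proof --- that step is the real content of the cited proposition.

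The genuine gap is in (3), for intersections. The correspondence of (2) is a bijection between parameterized D-subvarieties of $V$ and $\Pi$-algebraic subvarieties of $V^\sharp$; applying its inverse to $W_1^\sharp\cap W_2^\sharp$ produces the parameterized D-subvariety $\operatorname{\Delta-Clo}(W_1^\sharp\cap W_2^\sharp)$, not $W_1\cap W_2$. The set identity $(W_1\cap W_2)^\sharp=W_1^\sharp\cap W_2^\sharp$ does not close the loop: you still must show that this set is $\Delta$-dense in $W_1\cap W_2$, and by the criterion in (2) that is exactly the statement being proved, so your reduction is circular. ($\Delta$-closure commutes with finite unions, so your argument for unions is fine and matches the paper's.) The missing ingredient is the differential Nullstellensatz computation the paper supplies:
$I_\Delta(W_1^\sharp\cap W_2^\sharp)=\sqrt{I_\Delta(W_1^\sharp)+I_\Delta(W_2^\sharp)}=\sqrt{I_\Delta(W_1)+I_\Delta(W_2)}=I_\Delta(W_1\cap W_2)$,
where the middle step uses $I_\Delta(W_i^\sharp)=I_\Delta(W_i)$ from (2). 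Without something of this kind, your ``no further work is needed'' is not justified.
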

\begin{proof} Fix a parameterized D-variety $(V,s)$.
\begin{enumerate}
\item If $W$ is a $\Delta$-component of $V$, then $\ta W$ and $\ta V$ agree on a nonempty $\Delta$-open subset $Q$ of $W$ (see \cite[Lemma 2.3.9]{Omar2}). Then, $s(Q)\subset \ta W$, but since $Q$ is $\Delta$-dense in $W$, we have $s(W)\subset \ta W$.
\item This is \cite[Proposition 3.10]{Omar1}.
\item By Noetherianity of the $\Delta$-topology (and induction) it suffices to show that the intersection of two parameterized D-subvarieties $W_1$ and $W_2$ is again a parameterized D-subvariety. Moreover, it suffices to consider the affine case. By (2) we must show that $(W_1\cap W_2)^\sharp$ is $\Delta$-dense in $W_1\cap W_2$, but this follows from the next equalities of ideals:
$$I_\Delta(W_1^\sharp\cap W_2^\sharp)=\sqrt{I_\D(W_1^\sharp)+I_\D(W_2^\sharp)}=I_\Delta(W_1\cap W_2),$$
where the last equality uses $I_\Delta(W_i^\sharp)=I_\Delta(W_i)$, for $i=1,2$, which holds by (2). To see that the union $W_1\cup W_2$ is a parameterized D-subvariety, we use (2) and
$$\operatorname{\Delta-Clo}((W_1\cup W_2)^\sharp)=\operatorname{\Delta-Clo}(W_1^\sharp)\cup\operatorname{\Delta-Clo(W_2^\sharp)}=W_1\cup W_2,$$
where $\operatorname{\Delta-Clo}$ denotes closure in the $\Delta$-topology.
\end{enumerate}
\end{proof}

The following lemma and proposition will be used in Section \ref{QED}. 

\begin{lem}\label{usil}
Let $(V,s)$ be a parameterized D-variety defined over $K$. Suppose $A$ is a (finite) Boolean combination of parameterized D-subvarieties (which are not necessarily defined over $K$). If A is definable over $K$, then $A$ is a Boolean combination of parameterized D-subvarieties each defined over $K$.
\end{lem}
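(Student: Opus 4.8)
The plan is to reduce to the sharp-points picture, where the statement becomes a purely topological fact about $\Pi$-algebraic subvarieties of $V^\sharp$, and then transport the conclusion back via the correspondence in Lemma \ref{onsubvar}(2). First I would write $A = \bigcup_{i} \bigl(W_i \setminus \bigcup_j W_{ij}\bigr)$ for suitable parameterized D-subvarieties $W_i, W_{ij}$ defined over some field extension of $K$ (not necessarily $K$ itself); reorganising a Boolean combination into this ``disjoint union of locally closed pieces'' form is routine using Lemma \ref{onsubvar}(3), which guarantees the class of parameterized D-subvarieties is closed under intersection and finite union. Taking sharp points and using that $(\,\cdot\,)^\sharp$ commutes with the Boolean operations (it is just intersection with $V^\sharp$), we get that $A^\sharp$ is a Boolean combination of the $\Pi$-algebraic subvarieties $W_i^\sharp$, $W_{ij}^\sharp$ of $V^\sharp$. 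Since $A$ is definable over $K$ and $A^\sharp = A \cap V^\sharp$, the set $A^\sharp$ is $K$-definable in $(\mathcal U, \Pi)$; by quantifier elimination in $DCF_{0,m}$ it is then a finite Boolean combination of $\Pi$-algebraic varieties over $K$, and since it already lies inside $V^\sharp$ we may take these to be $\Pi$-algebraic subvarieties of $V^\sharp$ defined over $K$.

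Now I would invoke the 1:1 correspondence in Lemma \ref{onsubvar}(2) between parameterized D-subvarieties of $V$ and $\Pi$-algebraic subvarieties of $V^\sharp$, together with the fact (from the same statement, via $I_\Delta(W^\sharp)=I_\Delta(W)$) that this correspondence is inclusion-preserving in both directions and commutes with finite unions and intersections; hence it carries the $K$-defined $\Pi$-algebraic subvarieties of $V^\sharp$ appearing in the Boolean decomposition of $A^\sharp$ back to parameterized D-subvarieties of $V$ defined over $K$, and the Boolean combination expressing $A^\sharp$ inside $V^\sharp$ pulls back to a Boolean combination of these $K$-defined D-subvarieties. Call that Boolean combination $B$. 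By construction $B^\sharp = A^\sharp$ inside $V^\sharp$. It remains to check that $B = A$ as subsets of $V$, i.e. that a Boolean combination of parameterized D-subvarieties is determined by its sharp points: this is exactly where Lemma \ref{onsubvar}(2) does the work again, since each constituent D-subvariety $W$ satisfies $\Delta\text{-}\mathrm{Clo}(W^\sharp) = W$, so two Boolean combinations with the same trace on $V^\sharp$ coincide.

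The main obstacle I anticipate is the bookkeeping in the last step — making precise that the bijection of Lemma \ref{onsubvar}(2) is a lattice isomorphism (respects $\cup$, $\cap$, and hence set-differences of the ``locally closed'' pieces), and that equality of sharp-point sets forces equality of the ambient Boolean combinations. One has to be slightly careful because a Boolean combination of D-subvarieties is not itself a D-subvariety, so Lemma \ref{onsubvar}(2) cannot be applied to $A$ directly; the argument must be run piece-by-piece on the (finitely many) D-subvarieties occurring in a normal form and then reassembled. Once the normal form is fixed at the outset, though, everything else is a diagram chase through the $\sharp$-functor, using only Fact \ref{ConstFact}/quantifier elimination to descend the field of definition from the ambient $\mathcal U$ down to $K$.
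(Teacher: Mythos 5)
Your argument is correct in outline but takes a genuinely different route from the paper's. The paper argues by induction on Morley rank entirely on the $\Delta$-variety side: it sets $\bar A=\D$-$\operatorname{Clo}(A)$, observes that $\bar A$ is a parameterized D-subvariety defined over $K$ (by Lemma \ref{onsubvar}, since $\bar A$ is a finite union of D-subvarieties and is $K$-definable), applies the induction hypothesis to the lower-rank $K$-definable set $\bar A\setminus A$, and concludes from $A=\bar A\setminus(\bar A\setminus A)$. Your proof instead transports everything to $V^\sharp$, uses quantifier elimination in $DCF_{0,m}$ to obtain a $K$-rational Boolean decomposition there, and pulls it back through the bijection of Lemma \ref{onsubvar}(2). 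What your route buys is an explicit description of the $K$-defined constituents (as $\Delta$-closures of the $\Pi$-algebraic subvarieties of $V^\sharp$ produced by quantifier elimination); what it costs is the injectivity statement at the end, which the paper's induction avoids entirely.

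That last step is the one place where your justification is too thin, and it is more than bookkeeping. Knowing that each constituent $W$ satisfies $\D$-$\operatorname{Clo}(W^\sharp)=W$ does not by itself yield that two Boolean combinations with the same trace on $V^\sharp$ coincide; the implication does not pass through differences and unions automatically. The clean fix: $X\mapsto X\cap V^\sharp$ is a homomorphism of Boolean algebras from the algebra generated by the parameterized D-subvarieties of $V$ to the subsets of $V^\sharp$, so injectivity is equivalent to the statement that a \emph{nonempty} Boolean combination of D-subvarieties has nonempty set of sharp points; apply this to the symmetric difference of $A$ and your reconstructed set $B$. That nonemptiness (in fact $\Delta$-density) is exactly Proposition \ref{uselater} of the paper, proved by reducing to an irreducible locally closed piece $W\cap Q$ and using that $W^\sharp$ is $\Delta$-dense in the irreducible $W$; its proof uses only Lemma \ref{onsubvar}, so there is no circularity in invoking it here. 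With that cited, your argument goes through.
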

\begin{proof}
This follows by induction on the rank (Morley rank say) of $A$. Let $\bar A=\D$-$\operatorname{Clo}(A)$ be the closure of $A$ in the $\D$-topology. We note that $\bar A$ is also defined over $K$. Then, by Lemma \ref{onsubvar}, $\bar A$ is a parameterized D-subvariety of $V$ defined over $K$. Thus, the set $\bar A\setminus A$ is definable over $K$ and is a Boolean combination of parameterized D-subvarieties. By induction, $\bar A\setminus A$ is a Boolean combination of parameterized D-subvarieties each defined over $K$. The result now follows since $A=\bar A\setminus (\bar A\setminus A)$.
\end{proof}

The following is  the parameterized PDE analog of \cite[Lemma 2.5(ii)]{KoPi}.

\begin{prop}\label{uselater}
Let $(V,s)$ be a parameterized D-variety. If A is a (finite) Boolean combination of parameterized D-subvarieties of $V$, then $A^\sharp$ is $\D$-dense in $A$.
\end{prop}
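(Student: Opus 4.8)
The plan is to put $A$ into a Boolean normal form adapted to the lattice of parameterized D-subvarieties, and then to exploit density of the sharp points of a \emph{single} parameterized D-subvariety. By Lemma~\ref{onsubvar}(3) the parameterized D-subvarieties of $V$ are closed under finite intersections and finite unions, and they are $\D$-closed; also $V$ itself (and $\emptyset$) is such a subvariety. Hence an arbitrary Boolean combination of parameterized D-subvarieties is a finite union of sets of the shape
$\big(\bigcap_j P_j\big)\cap\big(\bigcap_k (V\setminus Q_k)\big)=\big(\bigcap_j P_j\big)\setminus\big(\bigcup_k Q_k\big)$,
and, putting $W=\bigcap_j P_j$ and $W'=W\cap\bigcup_k Q_k$, this equals $W\setminus W'$ with $W,W'$ parameterized D-subvarieties and $W'\subseteq W$. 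So I may write $A=\bigcup_{i=1}^N(W_i\setminus W_i')$ with $W_i'\subseteq W_i$ parameterized D-subvarieties of $V$.

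Next I would observe that passing to sharp points is compatible with this description: a point of $V$ lies in $W_i^\sharp\setminus(W_i')^\sharp$ precisely when it is a sharp point of $V$ lying in $W_i$ but not in $W_i'$, so
$A^\sharp=\bigcup_{i=1}^N\big(W_i^\sharp\setminus(W_i')^\sharp\big)=\bigcup_{i=1}^N(W_i\setminus W_i')^\sharp$.
Since $\D$-closure commutes with finite unions, it is then enough to prove the statement for a single piece, i.e.\ to show that whenever $W'\subseteq W$ are parameterized D-subvarieties of $V$, the set $(W\setminus W')^\sharp=W^\sharp\setminus(W')^\sharp$ is $\D$-dense in $W\setminus W'$.

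For this last step, write $\overline{X}$ for the closure of $X$ in the $\D$-topology. By Lemma~\ref{onsubvar}(2), $W$ being a parameterized D-subvariety gives $\overline{W^\sharp}=W$. Now the set $\overline{W^\sharp\setminus(W')^\sharp}\cup W'$ is $\D$-closed (as $W'$ is a $\D$-subvariety) and contains $W^\sharp=\big(W^\sharp\setminus(W')^\sharp\big)\cup(W')^\sharp$, hence it contains $\overline{W^\sharp}=W$; subtracting $W'$ yields $W\setminus W'\subseteq\overline{W^\sharp\setminus(W')^\sharp}=\overline{(W\setminus W')^\sharp}$, which is exactly the assertion.

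I expect the only genuine input to be Lemma~\ref{onsubvar}(2), together with the lattice structure in Lemma~\ref{onsubvar}(3); granting these, the argument is a short manipulation of $\D$-closures and should present no real obstacle. The one place that needs (routine) care — immediate from the definitions of $V^\sharp$ and of a parameterized D-subvariety — is the Boolean normal form and the identity $(W\setminus W')^\sharp=W^\sharp\setminus(W')^\sharp$, i.e.\ that $\sharp$ is compatible with set-theoretic difference of D-subvarieties. As an alternative to the normal form, one could instead run the reduction by induction on the $\D$-dimension (or Morley rank) of $A$, closing $A$ up in the $\D$-topology and applying Lemma~\ref{onsubvar}(2) to $\overline A$, in the spirit of the proof of Lemma~\ref{usil}.
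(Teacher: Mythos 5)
Your proof is correct and follows essentially the same route as the paper's: both reduce, via Lemma~\ref{onsubvar}(3), to a single locally $\D$-closed piece and then invoke the $\D$-density of sharp points of a parameterized D-subvariety from Lemma~\ref{onsubvar}(2). The only divergence is in the final step, where the paper further passes to an irreducible component (Lemma~\ref{onsubvar}(1)) and uses irreducibility, whereas your closure computation $W\subseteq\overline{W^{\sharp}\setminus(W')^{\sharp}}\cup W'$ dispenses with irreducibility altogether.
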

\begin{proof}
By Lemma \ref{onsubvar} (3), we may assume that $A$ is of the form $W\cap Q$ where $W$ is a parameterized D-subvariety of $V$ and $Q$ is a $\D$-open set. Moreover, by (1) of the same lemma, we may assume that $W$ is irreducible. Now, by (2) of Lemma \ref{onsubvar}, we have that $V^\sharp$ is $\Delta$-dense in $V$, and hence $A^\sharp$ is $\Delta$-dense in $A$.
\end{proof}

We finish this section with the following proposition which establishes the connection between parameterized D-groups and definable groups in $(\mathcal U,\Pi)$ of $\Pi$-type bounded by $\Delta$. Recall that given a $K$-definable set $X$ of $(\mathcal U,\Pi)$ we say that $\D$ \emph{bounds} the $\Pi$-type of $X$ if for each $a\in X$ the $\Pi$-field generated by $a$ over $K$ is also finitely generated as a $\Delta$-field, i.e., there exists a (finite) tuple $\alpha$ such that $K\langle a\rangle_\Pi=K\langle\alpha\rangle_\Delta$.

\begin{prop}
The $\sharp$-points functor is an equivalence between the category of parameterized D-groups over $K$ and the category of $\Pi$-algebraic groups over $K$ with $\Pi$-type bounded by $\Delta$. 
\end{prop}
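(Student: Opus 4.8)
The plan is to establish the three properties defining an equivalence of categories: that the $\sharp$-points functor is well defined on objects and morphisms, that it is fully faithful, and that it is essentially surjective. Well-definedness is immediate from the setup. If $(G,s)$ is a parameterized D-group then $s\colon G\to\ta G$ is a homomorphism of $\D$-algebraic groups and $\nabla_\DD\colon g\mapsto(g,D_1g,\dots,D_rg)$ is a homomorphism into $\ta G$ (both using that $\ta$ commutes with products), so $G^\sharp=\{g:s(g)=\nabla_\DD g\}$ is a $\Pi$-algebraic subgroup of $G$; moreover for $g\in G^\sharp$ we get $D_ig=s_i(g)\in K\langle g\rangle_\D$, hence $K\langle g\rangle_\Pi=K\langle g\rangle_\D$, so $\D$ bounds the $\Pi$-type of $G^\sharp$. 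A D-morphism of D-groups restricts, by the very definition of $V^\sharp$, to a $\Pi$-algebraic homomorphism of $\sharp$-points.

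For fully faithfulness the essential inputs are Lemma~\ref{onsubvar}(2) and Proposition~\ref{uselater}. Faithfulness: two D-morphisms $(G,s)\to(G',s')$ that agree on $G^\sharp$ agree on $\D\text{-}\operatorname{Clo}(G^\sharp)=G$, since $G^\sharp$ is $\D$-dense in $G$ by Proposition~\ref{uselater}. Fullness: given a $\Pi$-algebraic homomorphism $\phi\colon G^\sharp\to (G')^\sharp$, its graph $\Gamma_\phi$ is a $\Pi$-algebraic subgroup of $G^\sharp\times (G')^\sharp=(G\times G')^\sharp$, so by Lemma~\ref{onsubvar}(2) it equals $W^\sharp$ for a unique parameterized D-subvariety $W$ of $(G\times G',(s,s'))$; as $W=\D\text{-}\operatorname{Clo}(\Gamma_\phi)$ it is in fact a D-subgroup. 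The first projection $W\to G$ is a homomorphism of $\D$-algebraic groups whose image is $\D$-closed and contains the $\D$-dense subgroup $G^\sharp$, hence is onto, and which is injective on the $\D$-dense subset $\Gamma_\phi=W^\sharp$ of $W$, hence has trivial kernel; being a bijective homomorphism in characteristic zero it is an isomorphism, so $W$ is the graph of a D-morphism $\psi\colon(G,s)\to(G',s')$, which is then a homomorphism with $\psi|_{G^\sharp}=\phi$.

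Essential surjectivity is the substantive step, and it is where the hypothesis that $\D$ bounds the $\Pi$-type is used. Let $H$ be a $\Pi$-algebraic group over $K$ whose $\Pi$-type is bounded by $\D$; by the embedding theorem for $\Pi$-algebraic groups we may view $H$ inside an algebraic group. Pick a $\Pi$-generic $a$ of the $\Pi$-connected component $H^\circ$ over $K$. Then $K\langle a\rangle_\Pi$ is a finitely $\D$-generated $\D$-field closed under $\DD$, so a finite $\D$-generating tuple of it is $\Pi$-interdefinable with $a$ over $K$ and has all of its $\DD$-derivatives in the $\D$-field it generates; after replacing $H$ by the resulting $\Pi$-birationally isomorphic group (a group-chunk argument), and propagating this from the generic to every point by a conjugation/independence argument in the stable theory $DCF_{0,m}$, we may assume $D_ih\in K\langle h\rangle_\D$ for all $h\in H$, say $D_ih=s_i(h)$ for $\D$-rational maps $s_i$ over $K$. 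Now set $G=\D\text{-}\operatorname{Clo}(H)$, a $\D$-algebraic group, and let $s=(\operatorname{Id},s_1,\dots,s_r)\colon G\to\ta G$ on the $\D$-dense open where it is defined; since $s$ agrees with $\nabla_\DD$ on the $\D$-dense set $H$ it is a $\D$-rational homomorphism into $\ta G$, the integrability condition holds on $H$ because $D_iD_j=D_jD_i$ and, being $\D$-closed, holds on all of $G$, and a group-chunk argument for $\D$-algebraic groups lets us take $s$ to be everywhere defined. Thus $(G,s)$ is a parameterized D-group over $K$ with $H\subseteq G^\sharp$. For the reverse inclusion: each $g\in G^\sharp$ satisfies $K\langle g\rangle_\Pi=K\langle g\rangle_\D$, so its $\D$-dimension over $K$ is at most that of $a$; if $g$ is moreover $\D$-generic in $G$ then, since the $s_i$ and all iterated $\DD$-derivatives are $\D$-rational over $K$, $g$ realizes the $\Pi$-type of $a$ over $K$ and so lies in $H$; and every $\D$-coset of $G^\circ$ in $G$ meets $G^\sharp$ in a $\D$-dense set, which must contain such a $\D$-generic point, forcing $G^\sharp=H$. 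Hence the original $H$ is isomorphic to $G^\sharp$ for the D-group $(G,s)$.

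I expect the main obstacle to be this last construction: arranging, via a $\Pi$-birational modification, that the $\DD$-derivatives become $\D$-rational functions of the point; producing the section $s$ as an everywhere-defined group homomorphism, which rests on the differential-algebraic analogue of Weil's theorem on group laws together with the extension of rational homomorphisms of $\D$-algebraic groups; and verifying $G^\sharp=H$ exactly rather than merely up to finite index, for which the boundedness of the $\Pi$-type by $\D$ is precisely what is needed.
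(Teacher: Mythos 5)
Your argument is correct in outline but follows a genuinely different route from the paper, which disposes of the proposition in a few lines: essential surjectivity is obtained by citing \cite[Theorem 4.6]{Omar1} (every $\Pi$-algebraic group of $\Pi$-type bounded by $\Delta$ is isomorphic to $G^\sharp$ for some parameterized D-group), and for morphisms the paper observes only that a $\Pi$-isomorphism of $\sharp$-points extends to a generically defined $\Delta$-morphism of the ambient $\Delta$-algebraic groups and hence, by the standard extension theorem for generically given homomorphisms, to an actual $\Delta$-isomorphism. What you do differently: for full faithfulness you pass to the graph $\Gamma_\phi$, identify it via Lemma~\ref{onsubvar}(2) with $W^\sharp$ for a parameterized D-subgroup $W$ of the product, and show the projection $W\to G$ is an isomorphism; this is cleaner and, unlike the paper's phrasing, visibly handles arbitrary homomorphisms rather than just isomorphisms, which is what an equivalence of categories actually requires. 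For essential surjectivity you reconstruct the content of the cited theorem: generic of $H$, boundedness of the $\Pi$-type giving $\Delta$-rationality of the $\DD$-derivatives, a group-chunk/extension argument to make the section everywhere defined, and a two-generics argument to get $G^\sharp=H$ on the nose. This is the standard Pillay-style construction and the steps you invoke (the embedding of $\Pi$-algebraic groups, the extension of generically defined sections and homomorphisms, propagation from generics to all points by translation against an independent generic) are all genuinely needed and genuinely available, but be aware that at the level of detail given they are a sketch of the proof of \cite[Theorem 4.6]{Omar1} rather than a replacement for it; the one step I would want spelled out more carefully is that a $\Delta$-generic point of $G^\circ$ lying in $G^\sharp$ necessarily realizes $tp_\Pi(a/K)$ (this uses uniqueness of the $\Delta$-generic type of the connected component together with the fact that the section determines the $\DD$-derivatives). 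In short: your approach buys self-containedness and a sharper treatment of morphisms, at the cost of re-deriving a result the paper is content to quote.
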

\begin{proof}
By \cite[Theorem 4.6]{Omar1}, every $\Pi$-algebraic group of $\Pi$-type bounded by $\Delta$ is isomorphic to $G^\sharp$ for some parameterized D-group $(G,s)$. On the other hand, given two parameterized D-groups $(G,s)$ and $(H,t)$, any $\Pi$-isomorphism between $G^\sharp$ and $H^\sharp$ extends to a \emph{generically} defined $\Delta$-morphism of the $\Delta$-algebraic groups $G$ and $H$. It is well known that every such generically defined $\Delta$-morphism extends to an actual $\Delta$-isomorphism. The result follows. 
\end{proof}

\section{On parameterized strongly normal extensions}

We use the notation of the previous section; in particular, we have a partition $\mathcal D\cup\Delta$ of the set of derivations $\Pi$ where $\mathcal{D}=\{D_1,\ldots,D_r\}$, $r>0$, and $\Delta=\{\delta_1,\ldots,\delta_{m-r}\}$. Throughout this section we fix a $\Delta$-algebraic group $G$ defined over $K^{\mathcal D}$ and denote its idenity element by $e$. Note that in this case 
$$\ta G=(T_\Delta G)^r:=T_\Delta G\times_G\cdots \times_G T_\Delta G,$$
and thus the fibre of $\ta G$ over the identity $e$ of $G$ is equal to the $r$-fold product of $\mathfrak L_\Delta G$ (the $\Delta$-Lie algebra of $G$). Furthermore, we equip $G$ with the canonical \emph{zero section} $s_0$ of $\ta G$ and, henceforth, we will work with the parameterized D-group $(G,s_0)$. We note that the parameterized D-subgroups of $(G,s_0)$ defined over $K$ are precisely the $\Delta$-algebraic subgroups of $G$ that are defined over $K^\DD$.

A point $a\in \ta G_e$ is said to be \emph{integrable} if $(G,as_0)$ is a parameterized D-variety. Here $as_0:G\to \ta G$ denotes the $\D$-section given by $(as_0)(y)=a\cdot s_0(y)$ where the product occurs in $\ta G$. Since $\nabla_{\DD} e$ is the identity of $\ta G$, the point $\nabla_\DD e$ is integrable.

\begin{rem}
If $G$ is a linear (i.e., a $\D$-algebraic subgroup of some GL$_n$), then a point $(\operatorname{Id},A_1,\dots,A_r)\in \ta G_e\subseteq \mathfrak{gl}_n^r$ is integrable if and only if
$$D_i A_j -D_jA_i=[A_i,A_j],  \quad \text{ for } i,j=1,\dots,r.$$
These are usual integrability conditions for homogeneous linear differential algebraic equations, see \cite{CaSi} for instance.
\end{rem}

We have the following:

\begin{fct}\label{intpts}\cite[Lemma 5.3]{Omar1}
A point $a\in \ta G_e$ is integrable if and only if the system of $\Pi$-algebraic equations
$$\nabla_\DD y=a\cdot s_0(y)$$
has a solution in $G$.
\end{fct}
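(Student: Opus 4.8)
The plan is to derive both directions from one recurring fact: if $(W,t)$ is a parameterized D-variety then $W^\sharp$ is $\Delta$-dense in $W$ (this is Lemma \ref{onsubvar}(2) applied to the $\Delta$-subvariety $W$ of $(W,t)$ itself), together with the multiplicativity of the operator $\nabla_\DD$. Recall also that $(G,s_0)$ is a parameterized D-variety, so $G^\sharp$ is $\Delta$-dense in $G$.

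\emph{Forward direction.} If $a$ is integrable then, by definition, $(G,as_0)$ is a parameterized D-variety. Hence $(G,as_0)^\sharp$ is $\Delta$-dense in $G$, and in particular nonempty since $G$ is. By the definitions of $as_0$ and of $\sharp$-points, any $y\in(G,as_0)^\sharp$ satisfies $\nabla_\DD y=(as_0)(y)=a\cdot s_0(y)$, i.e. it is a solution in $G$ of the given system.

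\emph{Converse direction.} Suppose $y_0\in G$ satisfies $\nabla_\DD y_0=a\cdot s_0(y_0)$. I first record that $\nabla_\DD$ is multiplicative: since the group operations of $\ta G$ are by construction the prolongations (under $\ta$) of those of $G$, since $\ta$ commutes with products, and since $\nabla_\DD$ is natural with respect to $\Delta$-morphisms (i.e. $\ta f(\nabla_\DD v)=\nabla_\DD(f(v))$), one gets $\nabla_\DD(gh)=\nabla_\DD g\cdot\nabla_\DD h$ and $\nabla_\DD(g^{-1})=(\nabla_\DD g)^{-1}$ in $\ta G$. Now for any $g\in G^\sharp$, using that $s_0$ is a group homomorphism,
\[\nabla_\DD(y_0 g)=\nabla_\DD(y_0)\cdot\nabla_\DD(g)=(a\cdot s_0(y_0))\cdot s_0(g)=a\cdot s_0(y_0 g),\]
so $y_0 g\in(G,as_0)^\sharp$. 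Thus $(G,as_0)^\sharp\supseteq y_0\, G^\sharp$, and since $G^\sharp$ is $\Delta$-dense in $G$ and left translation by $y_0$ is a $\Delta$-automorphism of $G$, the translate $y_0\, G^\sharp$ — hence $(G,as_0)^\sharp$ — is $\Delta$-dense in $G$.

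It remains to see that this density forces the integrability condition for the section $as_0$, which will show $(G,as_0)$ is a parameterized D-variety and hence that $a$ is integrable. In any affine chart the integrability condition is an equality of two $\Delta$-polynomial functions of $v$, so it defines a $\Delta$-closed subset of $G$. It holds automatically on $(G,as_0)^\sharp$: if $v$ is a $\sharp$-point then $(as_0)_i(v)=D_i v$ for all $i$, and a chain-rule computation (using that each $D_i$ commutes with every $\theta\in\Theta_\Delta$) gives $d_{D_i/\Delta}(as_0)_j(v,(as_0)_i(v))=D_i((as_0)_j(v))=D_iD_jv$, which is symmetric in $i,j$ because the derivations commute. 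A $\Delta$-closed set containing the $\Delta$-dense set $(G,as_0)^\sharp$ is all of $G$, so the integrability condition holds everywhere. The main obstacle is the bookkeeping behind the two ``routine'' calculations — the multiplicativity of $\nabla_\DD$ and the identity $d_{D_i/\Delta}s_j(v,\nabla_{D_i}v)=D_i(s_j(v))$ — each of which requires unwinding the precise definitions of $\ta$, of the group law on $\ta G$, and of $d_{D_i/\Delta}$; once these are in hand, everything else is a formal consequence of the density of $\sharp$-points.
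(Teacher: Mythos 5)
The paper does not prove this statement: it is imported verbatim as a Fact from \cite[Lemma 5.3]{Omar1}, so there is no in-paper argument to compare against. Your proof is correct and is essentially the standard one: the forward direction is exactly the non-emptiness of the $\Delta$-dense set $(G,as_0)^\sharp$ via Lemma \ref{onsubvar}(2) applied to $G$ as a D-subvariety of itself, and the converse correctly avoids circularity by showing $(G,as_0)^\sharp\supseteq y_0\cdot G(\mathcal U^{\mathcal D})$ is $\Delta$-dense (using multiplicativity of $\nabla_{\mathcal D}$, i.e.\ naturality of $\nabla_{\mathcal D}$ under $\tau_{\mathcal D/\Delta}$ of the group law) and then observing that the integrability condition is a $\Delta$-closed condition that holds identically on sharp points by the chain-rule identity $d_{D_i/\Delta}s_j(v,D_iv)=D_i(s_j(v))$ and commutativity of the $D_i$.
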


We denote the set of integrable point of $(G,s_0)$ by $\operatorname{Int}(G,s_0)$. The {\em parameterized logarithmic derivative} on $(G,s_0)$ is defined by
\begin{eqnarray*}
\ell_{0}: G & \rightarrow & \quad\; \ta G_e\\
 g & \mapsto & \nabla_{\mathcal D}(g)\cdot s_0(g)^{-1}
\end{eqnarray*}
where the product and inverse occur in the $\Delta$-algebraic group $\ta G$. By Fact~\ref{intpts}, $\operatorname{Int}(G,s_0)$ is the image of $\ell_0$. Moreover, $\operatorname{Ker}(\ell_0)=(G,s_0)^\#=G(\mathcal U^\DD)$, and $\ell_0$ is a crossed homomorphism with respect to the adjoint action of $G$ on $\ta G_e$ given by $a^g:=\ta C_g(a)$ where $C_g$ denotes conjugation by $g$ in $G$ (see \cite[Lemma 5.4]{Omar1}).

For the remainder of this section we fix an integrable $K$-point $a$ of $(G,s_0)$ and a \emph{parameterized logarithmic equation}
\[\ell_{0}(y)=a, \quad \text{ where } y \text{ ranges in } G \tag{$\star$}.\]

\begin{defn} A {\em parameterized strongly normal} (PSN) extension of $K$ for $(\star)$ is a $\Pi$-field $L$ such that
\begin{enumerate}
\item $L=K\gen{\alpha}_{\Delta}$, for $\alpha$ a solution of $(\star)$, and
\item $L^{\mathcal{D}}=K^{\mathcal{D}}$
\end{enumerate}
\end{defn}

\begin{rmk}\label{remonex} \
\begin{enumerate}
\item As explained in Example 5.6 of \cite{Omar1}, if $G=$ GL$_n$, then the parameterized logarithmic equation $(\star)$ reduces to the homogeneous linear differential system 
$$D_1 Y=A_1 Y, \; \dots \; , D_r Y=A_r Y,$$
where the integrable $K$-point has the form $a=(\operatorname{Id},A_1,\dots,A_r)\in \mathfrak{gl}_n^r(K)$. Moreover, in this case, a parameterized strongly normal extension for $(\star)$ is precisely a parameterized Picard-Vessiot extension for the above linear system.
\item Suppose $L=\gen {\alpha}_\D$ is a PSN extension of $K$ for $(\star)$. If $\sigma:L\to \mathcal U$ is a $\Pi$-isomorphism over $K$, then $\sigma(\alpha)=\alpha\cdot c$ for some $c\in G(\mathcal U^\DD)$, which implies that $\sigma(L)\subset L\gen{\mathcal U^\DD}_\D$. Thus, PSN extensions for $(\star)$ are examples of the \emph{$\D$-strongly normal extensions} studied by Landesman in \cite{Land}.
\end{enumerate}
\end{rmk}

As a consequence of the definition, PSN extensions are contained in a differential closure of $(K,\Pi)$:

\begin{lem}\label{ongalgro}
Suppose $L=K\gen{\alpha}_\D$ is a PSN extension of $K$ for $(\star)$. Then, $L$ is a contained in a $\Pi$-closure of $K$. Consequently, every $\Pi$-isomorphism $\sigma:L\to \mathcal U$ over $K$ extends uniquely to a $\Pi$-automorphism of $L\gen{\mathcal U^\DD}_\D$ over $K\gen{\mathcal U^\DD}_\D$.
\end{lem}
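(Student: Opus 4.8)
The plan is to establish the two assertions separately, with the first being the technical core and the second following formally. For the first, I want to show that $L = K\langle\alpha\rangle_\Delta$ embeds into a $\Pi$-closure of $K$. The natural strategy is to show that $\alpha$, viewed as a point of the parameterized D-group $(G,as_0)$, lies in a set that is (in the $\Pi$-structure) internal to $\mathcal{U}^{\mathcal D}$, and then exploit that $(\mathcal{U}^{\mathcal D},\Delta)$ is differentially closed (Fact \ref{ConstFact}(1)), together with the hypothesis $L^{\mathcal D} = K^{\mathcal D}$. Concretely: pick any solution $\beta \in G$ of $(\star)$ lying in a $\Pi$-closure $\bar K$ of $K$ (such a $\beta$ exists since $a$ is integrable, so by Fact \ref{intpts} the $\Pi$-algebraic system $\nabla_{\mathcal D} y = a\cdot s_0(y)$ has a solution in $G(\mathcal U)$, and $\Pi$-closures contain solutions to consistent $\Pi$-algebraic systems over $K$). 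Since $\ell_0$ is a crossed homomorphism with kernel $G(\mathcal{U}^{\mathcal D})$, any two solutions of $(\star)$ differ by an element of $G(\mathcal{U}^{\mathcal D})$; hence $\alpha = \beta\cdot c$ for some $c \in G(\mathcal{U}^{\mathcal D})$. Now $c = \beta^{-1}\alpha \in L\langle\bar K\rangle_\Pi$, so the coordinates of $c$ generate (over $K$) a subfield of $L\langle\bar K\rangle_\Pi$ whose $\mathcal D$-constants I need to control.

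The key point is that $c$ lies in $G(\mathcal{U}^{\mathcal D})$, and I claim $c \in G(E)$ where $E$ is a $\Delta$-closure of $K^{\mathcal D}$ sitting inside $\bar K{}^{\mathcal D}$. Indeed, $\bar K$ being a $\Pi$-closure of $K$ forces $\bar K{}^{\mathcal D}$ to be a $\Delta$-closure of $K^{\mathcal D}$ (this is a standard fact about differential closures and constants — one can also see it from Fact \ref{ConstFact}(2) applied inside $\bar K$). Since $\beta \in G(\bar K)$ and $\alpha = \beta c$ with $\alpha \in L \subseteq \mathcal U$, we get $c \in G(\bar K \langle L\rangle_\Pi)$; and $c$ is $\mathcal D$-constant, so its coordinates lie in $(\bar K\langle L\rangle_\Pi)^{\mathcal D}$. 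Using $L^{\mathcal D} = K^{\mathcal D}$ and that $\bar K$ is a $\Pi$-closure of $K$, a no-new-constants argument for the compositum shows $(\bar K\langle L\rangle_\Pi)^{\mathcal D} = \bar K{}^{\mathcal D} = E$. Therefore $c \in G(E)$, whence $\alpha = \beta c \in G(\bar K\langle E\rangle_\Pi) = G(\bar K)$, and so $L = K\langle\alpha\rangle_\Delta \subseteq \bar K$, a $\Pi$-closure of $K$. The main obstacle I anticipate is precisely this no-new-constants step: verifying that forming the compositum of $L$ (which has no new $\mathcal D$-constants over $K$) with a $\Pi$-closure $\bar K$ of $K$ introduces no $\mathcal D$-constants beyond $\bar K{}^{\mathcal D}$. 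This should follow from linear disjointness of $L$ and $\bar K$ over $K$ in an appropriate sense, or from uniqueness of the $\Pi$-closure together with Remark \ref{remonex}(2)'s observation that $\sigma(L) \subseteq L\langle\mathcal{U}^{\mathcal D}\rangle_\Delta$; I would route it through whichever is cleaner given the earlier setup.

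For the second assertion, let $\sigma\colon L \to \mathcal U$ be a $\Pi$-isomorphism over $K$. By Remark \ref{remonex}(2), $\sigma(\alpha) = \alpha\cdot d$ for some $d \in G(\mathcal{U}^{\mathcal D})$, so $\sigma(L) \subseteq L\langle\mathcal{U}^{\mathcal D}\rangle_\Delta$. I extend $\sigma$ to $L\langle\mathcal{U}^{\mathcal D}\rangle_\Delta$ by declaring it to be the identity on $\mathcal{U}^{\mathcal D}$; I must check this is well-defined, i.e. that $L$ and $\mathcal{U}^{\mathcal D}$ are ``free enough'' over their common intersection. Since $L$ is contained in a $\Pi$-closure $\bar K$ of $K$ (first part) and $L^{\mathcal D} = K^{\mathcal D}$, the field $L$ is linearly disjoint from $\mathcal{U}^{\mathcal D}$ over $K^{\mathcal D}$ (a point has the same type over $K$ as over $K\langle\mathcal{U}^{\mathcal D}\rangle$ when its $\Pi$-locus is ``orthogonal to the constants'' in the relevant sense — here this is exactly the content of being a PSN, resp. $\Delta$-strongly normal, extension), so the amalgamated map $\bar\sigma$ on the compositum is a well-defined $\Pi$-field homomorphism. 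It is clearly onto $L\langle\mathcal{U}^{\mathcal D}\rangle_\Delta$ (its image contains $\mathcal{U}^{\mathcal D}$ and $\sigma(\alpha) = \alpha d$, hence $\alpha = \sigma(\alpha)d^{-1}$, hence all of $L$), so it is a $\Pi$-automorphism of $L\langle\mathcal{U}^{\mathcal D}\rangle_\Delta$ fixing $K\langle\mathcal{U}^{\mathcal D}\rangle_\Delta$ pointwise. Uniqueness is immediate: any $\Pi$-automorphism of $L\langle\mathcal{U}^{\mathcal D}\rangle_\Delta$ over $K\langle\mathcal{U}^{\mathcal D}\rangle_\Delta$ extending $\sigma$ is determined on $L$ by $\sigma$ and on $\mathcal{U}^{\mathcal D}$ by the requirement that it fix $K\langle\mathcal{U}^{\mathcal D}\rangle_\Delta$, since $\mathcal{U}^{\mathcal D} \subseteq K\langle\mathcal{U}^{\mathcal D}\rangle_\Delta$; and these two determine it on the compositum.
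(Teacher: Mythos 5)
There is a genuine gap in your argument for the first assertion, and it sits exactly where you flag it: the claim that $(\bar K\gen{L}_\Pi)^\DD=\bar K^\DD$. This is not a routine ``no new constants in composita'' fact: composita of no-new-$\DD$-constant extensions can perfectly well acquire new $\DD$-constants (this failure is precisely why Picard--Vessiot and PSN extensions need not exist in general, cf.\ the Seidenberg example in the introduction), and in your setup the identity $(\bar K\gen{L}_\Pi)^\DD=\bar K^\DD$ is essentially equivalent to the conclusion you are after, since the only constant at issue is $c=\beta^{-1}\cdot\alpha$ itself. Neither of your proposed routes closes it: linear disjointness of $L$ and $\bar K$ over $K$ is not available in advance (proving it amounts to the lemma), and Remark~\ref{remonex}(2) only controls images of $L$ under $\Pi$-isomorphisms, not constants of composita. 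The missing idea, which is how the paper proceeds, is to bring in a $\Pi$-closure $\bar L$ of $L$ rather than working only with $\bar K$: comparing $\alpha\in L<\bar L$ with a solution $\beta$ of $(\star)$ lying in a $\Pi$-closure of $K$ inside $\bar L$, the element $c=\beta^{-1}\cdot\alpha$ lies in $G(\bar L)\cap G(\mathcal U^\DD)=G(\bar L^\DD)$; and $(\bar L^\DD,\D)$ is a $\D$-closure of $(L^\DD,\D)=(K^\DD,\D)$ --- this is where the hypothesis $L^\DD=K^\DD$ actually does its work --- so $c$ is $\D$-constrained over $K^\DD$ and $\bar L^\DD$ may be taken inside $\bar K$. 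Then $\alpha=\beta\cdot c$ has both factors in $\bar K$, hence $L<\bar K$. Your version never establishes that $c$ is constrained over $K^\DD$, and without that there is no reason for it to land in $\bar K$.

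For the ``consequently'' clause your argument is close in spirit to the paper's but again leans on an unproved freeness assertion: you extend $\sigma$ by the identity on $\mathcal U^\DD$ and justify well-definedness by ``linear disjointness of $L$ from $\mathcal U^\DD$ over $K^\DD$,'' glossed as orthogonality to the constants, but being a PSN extension does not literally say this. The precise statement needed is that $tp(\alpha/K)$ implies $tp(\alpha/K\gen{\mathcal U^\DD}_\D)$, and the paper gets it from the first part: since $L$ lies in a $\Pi$-closure of $K$, the type $tp(\alpha/K)$ is isolated by some formula $\phi$, and a standard argument shows $\phi$ still isolates $tp(\alpha/K\gen{\mathcal U^\DD}_\D)$; hence $\alpha$ and $\sigma(\alpha)$ have the same type over $K\gen{\mathcal U^\DD}_\D$ and $\sigma$ extends. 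Your uniqueness argument is fine. I would rewrite the second half to route explicitly through isolation of $tp(\alpha/K)$, which becomes available only once the first assertion is correctly proved.
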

\begin{proof}
Let $\bar L$ be a $\Pi$-closure of $L$, and let $\bar K$ a $\Pi$-closure of $K$. Since $(\bar L^\DD,\D)$ is a $\D$-closure of $(L^\DD,\D)$, $\bar L^\DD$ embedds in $\bar K$. Thus, we may assume $\bar L^\DD< \bar K$. Now let $\beta$ be a solution of $(\star)$ in $\bar K$. Then, $\alpha=\beta\cdot c$ for some $c\in G(\bar L^\DD)\subset G(\bar K)$. Thus, $\alpha$ is a tuple from $\bar K$, and consequently $L<\bar K$.  

For the ``consequently'' clause, suppose $\sigma:L\to \mathcal U$ is a $\Pi$-isomorphism over $K$. Since $L$ is contained in a $\Pi$-closure of $K$, there is a formula $\phi$ (in the language of $\Pi$-rings) isolating the type $tp(\alpha/K)$. A standard argument shows that $\phi$ also isolates the type $tp(\alpha/K\gen{\mathcal U^\DD}_\D)$. This implies that 
$$tp(\alpha/K\gen{\mathcal U^\DD}_\D)=tp(\sigma(\alpha)/K\gen{\mathcal U^\DD}_\D).$$ 
It then follows (from the fact that $L\gen{\mathcal U^\DD}_\D=\sigma(L)\gen{\mathcal U^\DD}_\D$, see Remark \ref{remonex} (2)) that $\sigma$ extends uniquely to a $\Pi$-automorphism of $L\gen{\mathcal U^\DD}_\D$ over $K\gen{\mathcal U^\DD}_\D$.
\end{proof}

Let $L$ be a PSN extension of $K$ for $(\star)$. The Galois group of $\Pi$-automorphisms of $L$ is defined as
$$\operatorname{Gal}(L/K):=Aut_\Pi(L\langle \mathcal U^\DD\rangle_\Pi/K\langle \mathcal U^\DD\rangle_\Pi).$$
That is, $\operatorname{Gal}(L/K)$ is the group of $\Pi$-automorphisms of $L\langle \mathcal U^\DD\rangle_\Pi$ fixing $K\langle \mathcal U^\DD\rangle_\Pi$ pointwise. We also have the group
$$\operatorname{gal}(L/K):=Aut_\Pi(L/K).$$
By Lemma \ref{ongalgro}, $\operatorname{gal}(L/K)$ can be identified with a subgroup of $\operatorname{Gal}(L/K)$. 

We have the following important facts about the Galois group:

\begin{thm}
Suppose $L=K\gen \alpha_\Delta$ is a PSN extension for $(\star)$. Then
\begin{enumerate}
\item [(i)] There is a $\Delta$-algebraic subgroup $H$ of $G$ defined over $K^\DD$ such that the $\Pi$-algebraic group of $\DD$-constant points of $H$, $H(\mathcal U^\DD)$, is isomorphic to $\operatorname{Gal}(L/K)$. Moreover, the isomorphism 
$$\mu:\operatorname{Gal}(L/K)\to H(\mathcal U^\DD)$$
is given by $\mu(\sigma)=\alpha^{-1}\cdot \sigma(\alpha)$.
\item [(ii)] There is a natural Galois correspondence between the intermediate $\Pi$-fields (of $K$ and $L$) and the $\Delta$-algebraic subgroups of $H$ defined over $K^\DD$.
\item [(iii)] $\mu(\operatorname{gal}(L/K))=H(K^\DD).$
\end{enumerate}
\end{thm}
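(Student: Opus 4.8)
The plan is to prove the three parts essentially in sequence, with part (i) doing most of the heavy lifting. First I would define the candidate map $\mu\colon \operatorname{Gal}(L/K)\to G(\mathcal U^\DD)$ by $\mu(\sigma)=\alpha^{-1}\cdot\sigma(\alpha)$, and check it lands in $G(\mathcal U^\DD)$: since $\sigma$ fixes $K\langle\mathcal U^\DD\rangle_\Pi$ pointwise and $\ell_0(\alpha)=\ell_0(\sigma(\alpha))=a$, the cocycle property of $\ell_0$ (it is a crossed homomorphism, as recalled after Fact~\ref{intpts}) gives $\ell_0(\alpha^{-1}\sigma(\alpha))=e$, so $\alpha^{-1}\sigma(\alpha)\in\operatorname{Ker}(\ell_0)=G(\mathcal U^\DD)$. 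Then I would verify $\mu$ is an injective group homomorphism: injectivity is clear since $\alpha$ generates $L$ over $K$ (as a $\Delta$-field), and the homomorphism property follows from the computation $\mu(\sigma\tau)=\alpha^{-1}\sigma(\tau(\alpha))=\alpha^{-1}\sigma(\alpha)\cdot\sigma(\alpha)^{-1}\sigma(\tau(\alpha))=\mu(\sigma)\cdot\sigma(\alpha^{-1}\tau(\alpha))=\mu(\sigma)\mu(\tau)$, where the last step uses that $\mu(\tau)\in G(\mathcal U^\DD)$ is fixed by $\sigma$. The key structural point for (i) is then to identify the image: I would show $\mu(\operatorname{Gal}(L/K))$ is exactly $H(\mathcal U^\DD)$ for a $\Delta$-subgroup $H\leq G$ defined over $K^\DD$. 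Here I would use the correspondence from Section~2 between $\Pi$-algebraic subgroups of $G(\mathcal U^\DD)$ (equivalently, $\Delta$-closed subgroups of the $\Delta$-group $G$ over $\mathcal U^\DD$, using Fact~\ref{ConstFact}) and the orbit/stabilizer structure of the $\operatorname{Gal}$-action on the solution set $\{y\in G: \ell_0(y)=a\}=\alpha\cdot G(\mathcal U^\DD)$. Concretely, $\mu(\operatorname{Gal}(L/K))$ is a $\Pi$-definable subgroup of $G(\mathcal U^\DD)$ over $K\langle\mathcal U^\DD\rangle_\Pi$, hence (again by Fact~\ref{ConstFact}(2)) a $\Delta$-closed subgroup of $G$ over $K^\DD\langle\mathcal U^\DD\rangle_\Delta$; a descent/normality argument (the subgroup is invariant under the full Galois action and $L$ is contained in a $\Pi$-closure of $K$ by Lemma~\ref{ongalgro}) shows it is already defined over $K^\DD$, giving $H$.

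For part (ii), the Galois correspondence, I would set up the two maps in the standard way: to an intermediate $\Pi$-field $K\leq M\leq L$ associate $\mu\big(\operatorname{Gal}(L/M)\big)\leq H(\mathcal U^\DD)$, noting $M$ is itself a PSN extension of $K$ (for a possibly different logarithmic equation, on a quotient or the same $G$) and $L$ is a PSN extension of $M$, so part (i) applies to both layers and produces a $\Delta$-subgroup $H_M\leq H$ over $M^\DD=K^\DD$; conversely to a $\Delta$-subgroup $H'\leq H$ over $K^\DD$ associate the fixed field $L^{\mu^{-1}(H'(\mathcal U^\DD))}$. That these are mutually inverse is the usual argument: one containment is formal, and the other uses that $L$ sits in a $\Pi$-closure of $K$ so that Galois-theoretic fixed-point statements have no ``hidden'' automorphisms — precisely the content of the ``consequently'' clause of Lemma~\ref{ongalgro}, which lets one extend $\Pi$-isomorphisms over $M$ to automorphisms of $L\langle\mathcal U^\DD\rangle_\Delta$. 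I would also record that the correspondence is inclusion-reversing and that normal $\Delta$-subgroups correspond to intermediate PSN extensions, though the statement as given only asserts the bijection.

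For part (iii), I want $\mu(\operatorname{gal}(L/K))=H(K^\DD)$. The inclusion $\subseteq$ is immediate: if $\sigma\in\operatorname{gal}(L/K)=Aut_\Pi(L/K)$ then $\sigma(\alpha)\in L$, so $\mu(\sigma)=\alpha^{-1}\sigma(\alpha)\in G(L)\cap G(\mathcal U^\DD)=G(L^\DD)=G(K^\DD)$, and it lies in $H$ by (i). For the reverse inclusion, given $h\in H(K^\DD)$ I need a $\Pi$-automorphism $\sigma$ of $L$ over $K$ with $\alpha^{-1}\sigma(\alpha)=h$, i.e. $\sigma(\alpha)=\alpha h$; this is well-defined on $L=K\langle\alpha\rangle_\Delta$ provided $\alpha h$ satisfies the same $\Delta$-type over $K$ as $\alpha$. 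Since $\ell_0(\alpha h)=\ell_0(\alpha)=a$ (as $h\in G(\mathcal U^\DD)=\operatorname{Ker}(\ell_0)$ and $\ell_0$ is a crossed homomorphism) and $\alpha h\in G(L)$, the map $\alpha\mapsto\alpha h$ is a $\Pi$-ring isomorphism $K\{\alpha\}_\Delta\to K\{\alpha h\}_\Delta$ over $K$; it extends to the fraction fields and this is the desired $\sigma\in\operatorname{gal}(L/K)$ — here I use $L^\DD=K^\DD$ so that no new constants obstruct the extension, together with Lemma~\ref{ongalgro} to see $L\langle\alpha h\rangle_\Delta=L$. I expect the main obstacle to be in part (i): rigorously pinning down that the image subgroup is \emph{$\Delta$-closed and defined over $K^\DD$} (not merely over $K^\DD\langle\mathcal U^\DD\rangle_\Delta$) — this requires carefully invoking Fact~\ref{ConstFact} to pass between $\Pi$-definability over $K\langle\mathcal U^\DD\rangle_\Pi$ in $(\mathcal U,\Pi)$ and $\Delta$-definability over the $\DD$-constants in $(\mathcal U^\DD,\Delta)$, and then running a Galois-descent argument using that $L$ is contained in a $\Pi$-closure of $K$.
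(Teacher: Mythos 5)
Your overall architecture matches the paper's: define $\mu(\sigma)=\alpha^{-1}\sigma(\alpha)$, use the isolation of $tp(\alpha/K\gen{\mathcal U^\DD}_\Delta)$ (Lemma on PSN extensions lying in a $\Pi$-closure) together with Fact on $\DD$-constants to realize the image as $H(\mathcal U^\DD)$ for a $\Delta$-subgroup $H$ over $K^\DD$, and then run the standard correspondence. One stylistic difference in (i): the paper does not start from the image of $\mu$ and descend; it defines $V=\{g\in G(\mathcal U^\DD): Z\cdot g=Z\}$, the stabilizer of the $K$-definable set $Z=tp(\alpha/K)^{\mathcal U}$, which is manifestly $K$-definable, so Fact~\ref{ConstFact}(2) immediately gives definability over $K^\DD$ with no separate descent step. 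Your route (image is $\Pi$-definable over $K\gen{\mathcal U^\DD}_\Pi$, then Galois descent to $K^\DD$) can be made to work, but as sketched it is the vaguest part of your argument, and the stabilizer trick is what makes it painless; you should adopt it or spell out the descent.

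There is one step whose stated justification is wrong: in (iii), for the inclusion $H(K^\DD)\subseteq\mu(\operatorname{gal}(L/K))$ you claim that since $\ell_0(\alpha h)=\ell_0(\alpha)=a$ and $\alpha h\in G(L)$, the assignment $\alpha\mapsto\alpha h$ is automatically a $\Pi$-isomorphism over $K$. Being another solution of $(\star)$ lying in $L$ does \emph{not} imply having the same $\Pi$-type over $K$ as $\alpha$ — if it did, the Galois group would always be all of $G(\mathcal U^\DD)$. (Already in the nonparameterized Picard--Vessiot case: for $Dy=y/(2x)$ over $\mathbb C(x)$ with solution $\sqrt x$, the element $h=2$ of $\mathbb G_m(\mathbb C)$ gives another solution $2\sqrt x\in L$, but $\sqrt x\mapsto 2\sqrt x$ is not a field automorphism over $\mathbb C(x)$.) The point you must use is that $h$ lies in $H(K^\DD)\subseteq H(\mathcal U^\DD)$, which by the surjectivity of $\mu$ established in (i) already furnishes a $\sigma\in\operatorname{Gal}(L/K)$ with $\sigma(\alpha)=\alpha h$; then $\sigma(\alpha)\in G(L)$ shows $\sigma$ restricts to an element of $\operatorname{gal}(L/K)$. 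That is exactly the paper's argument, and it makes your from-scratch construction of $\sigma$ both unnecessary and, as justified, incorrect. The other direction of (iii) and your treatment of (ii) are fine and agree with the paper, modulo the same level of "it can be checked" that the paper itself permits.
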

\begin{proof}
These are standard arguments; however, we provide details for the sake of completeness.

\noindent (i) Let $Z$ be the set of realisations (from $\mathcal U$) of the type of $\alpha$ over $K$; i.e., $Z=tp(\alpha/K)^{\mathcal U}$. Note that, by Lemma \ref{ongalgro}, $Z$ is a $K$-definable set. Set $V=\{g\in G(\mathcal U^\DD): Z\cdot g=Z\}$. Then $V$ is a $\Pi$-algebraic subgroup of $G(\mathcal U^\DD)$ defined over $K$, by Fact \ref{ConstFact}, $V$ is actually defined over $K^\DD$. The desired $\D$-algebraic group is $H:=\D$-$\operatorname{Clo}(V)\leq G$, which is defined over $K^\DD$. Clearly, by the correspondence of Lemma \ref{onsubvar} (2), $V=H(\mathcal U^\DD)$, and furthermore for every $\sigma\in \operatorname{Gal}(L/K)$ we have that $\alpha^{-1}\cdot \sigma(\alpha)\in V$. Moreover, we have
$$\mu(\sigma_1\circ\sigma_2)=\alpha^{-1}\sigma_1(\sigma_2(\alpha))=\alpha^{-1}\sigma_1(\alpha)\sigma_1(\alpha^{-1}\sigma_2(\alpha))=\alpha^{-1}\sigma_1(\alpha)\alpha^{-1}\sigma_2(\alpha)=\mu(\sigma_1)\mu(\sigma_2),$$
where the third equality uses that $\sigma_1(\alpha^{-1}\sigma_2(\alpha))=\alpha^{-1}\sigma_2(\alpha)$ which follows from the fact that $\alpha^{-1}\sigma_2(\alpha)\in G(\mathcal U^\DD)$. Thus, $\mu$ is a group homomorphism. It can easily be checked that $\mu$ is a bijection, and so a group isomorphism.

\noindent (ii) By the correspondence of Lemma \ref{onsubvar} (2), it suffices to show the Galois correspondence between intermediate $\Pi$-subfields and $\Pi$-algebraic subgroups of $V=H(\mathcal U^\DD)$ defined over $K^\DD$. The correspondence is given as follows: If $K\leq F\leq L$ is an intermediate $\Pi$-field, then $L/F$ is a PSN extension for $(\star)$, and so $V_F:=\mu(\operatorname{Gal}(L/F))$ is a $\Pi$-algebraic subgroup of $V\leq G(\mathcal U^\DD)$ deifined over $F$. Since $G(L^\DD)=G(K^\DD)$, Fact \ref{ConstFact} implies that $V_F$ is actually defined over $K^\DD$. 

Now we prove that the correspondence $F\mapsto V_F$ is 1:1 and onto. Let $F_1\neq F_2$ be intermediate $\Pi$-fields. Let $b\in F_2\setminus F_1$. Then there is $\sigma\in \operatorname{Aut}_\Pi(\mathcal U/F_1)$ such that $\sigma(b)\neq b$. Setting $\sigma'$ to be the unique extension of $\sigma |_{L}$ to an element of $\operatorname{Gal}(L/F_1)$ (see Lemma \ref{ongalgro}), we see that $\sigma' \in \operatorname{Gal}(L/F_1)\setminus \operatorname{Gal}(L/F_2)$; in particular, $V_{F_1}\neq V_{F_2}$. For surjectivity, let $W$ be a $\Pi$-algebraic subgroup of $V$  defined ove $K^\DD$. Let $b$ be a tuple from $L$ that generates the minimal $\Pi$-field of definition of $\alpha\cdot W$. If we let $F=K\langle b\rangle_\Pi$, it can be checked that $V_F=W$. 

\noindent (iii) Let $\sigma\in \operatorname{Gal}(L/K)$. If $\mu(\sigma)\in H(K^\DD)$, then $\sigma(\alpha)=\alpha\cdot c$ for some $c \in H(K^\DD)$. In particular, $\sigma(\alpha)$ is a tuple from $L$, and so $\sigma\in \operatorname{gal}(L/K)$. On the other hand, if $\sigma\in\operatorname{gal}(L/K)$, then $\alpha^{-1}\sigma(\alpha)\in H(L^\DD)=H(K^\DD)$.
\end{proof}

Let us now discuss the issue of \emph{existence} of PSN extensions (they do not generally exist as we pointed out in the introduction). In \cite[\S5]{Omar1}, it was shown that these extensions exist if we make the additional assumption that $(K^\DD,\Delta)$ is $\D$-closed. To see this, let $\bar K$ be a $\Pi$-closure of $K$. Recall that any tuple of $\DD$-constants from $\bar K$ is $\Delta$-constrained (or equivalently $\Delta$-isolated) over $K^\DD$ and so, by the assumption on $(K^\DD,\Delta)$, this tuple must be in $K^\DD$. Now, since $a$ is a $K$-point in the image of $\ell_0$, we can find a solution $\alpha$  of $(\star)$ in $\bar K$. Setting $L:=K\gen{\alpha}_\Delta$ and using $\bar K^\DD=K^\DD$, we have that $L^\DD=K^\DD$ and so $L$ is a PSN extension of $K$ for $(\star)$. Moreover, the assumption that $(K^\DD,\Delta)$ is $\Delta$-closed implies that, up to $\Pi$-isomorphism over $K$, this is the only PSN extension of $K$ for $(\star)$.

There are weaker assumptions on the field of $\DD$-constants $K^\DD$ that imply the existence of PSN extensions. For instance, we have the following result of Wibmer:

\begin{thm}\label{wib}\cite[Theorem 8]{Wibmer}
When $G=\operatorname{GL}_n$ and $\D=\{\delta\}$, PSN extensions exist if $K^\DD$ is algebraically closed.
\end{thm}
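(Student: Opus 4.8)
The plan is to reduce the statement to the existence of a solution $\alpha$ of the linear system $(\star)$ over $K$ that generates no new $\DD$-constants, and to build such a solution inside a $\Pi$-closure $\bar K$ of $K$ by controlling the Galois group. First I would fix a solution $\beta$ of $(\star)$ in $\bar K$ (possible since $a$ is an integrable $K$-point, by Fact~\ref{intpts}), set $M=K\gen{\beta}_\Delta$, and observe that the only obstruction is that $M^\DD$ may strictly contain $K^\DD$. Since $\DD$-constants of $\bar K$ are $\Delta$-constrained over $K^\DD$, the field $M^\DD$ is a finitely $\Delta$-generated extension of $K^\DD$; I would set $C:=\overline{K^\DD}$ (the algebraic closure, available by hypothesis) and note $M^\DD$ sits inside a $\Delta$-closure of $C$.

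The key step is to pass from $M$ to a smaller solution. Any other solution of $(\star)$ in $M\gen{\mathcal U^\DD}_\D$ has the form $\beta\cdot c$ with $c\in \operatorname{GL}_n(\mathcal U^\DD)$, and $K\gen{\beta c}_\Delta$ has the same $\DD$-constants as $K$ exactly when $\beta c$ is ``defined over the constants'' in the appropriate sense. The strategy, following the classical Picard--Vessiot construction, is to take the differential torsor / Galois-type analysis: let $Z$ be the $K$-locus of $\beta$ (a $\Pi$-algebraic variety, hence a $\DD$-torsor over its $\DD$-constant-group stabilizer) and consider the $\DD$-constant points. One wants a $K^\DD$-rational point on the associated torsor over the constants; when $\D=\{\delta\}$ the relevant Galois group is a $\delta$-algebraic subgroup of $\operatorname{GL}_n$ over $C=\overline{K^\DD}$, and the torsor is a $\delta$-variety over $C$ which, because $C$ is algebraically closed, admits a $C$-point (Kolchin's theorem that torsors under connected groups over algebraically closed fields are trivial, applied in the $\delta$-setting — this is precisely where ``algebraically closed'' and the single-parameter hypothesis enter). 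Translating this $C$-point back gives a solution $\alpha$ with $K^\DD$-rational data, and then $L:=K\gen{\alpha}_\Delta$ satisfies $L^\DD=K^\DD$.

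I would organize the argument as: (1) produce $\beta\in\bar K$ and the finitely $\Delta$-generated extension $M^\DD/K^\DD$; (2) set up the Galois group $H\leq \operatorname{GL}_n$ over $C=\overline{K^\DD}$ as in Theorem~\ref{wib}'s ambient setting and the corresponding $H$-torsor $T$ over $C$ encoding the possible solutions modulo $\DD$-constant change of basis; (3) invoke triviality of the torsor over the algebraically closed $C$ to get a point over $C$, and hence after a base change a solution whose $\DD$-constants lie in $K^\DD$; (4) conclude $L=K\gen{\alpha}_\Delta$ is the desired PSN extension, checking condition~(2) of the definition directly and condition~(1) by construction.

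The main obstacle I expect is step~(3): making precise the torsor over the $\DD$-constants and verifying it is defined over $K^\DD$ (not merely over $M^\DD$ or $\bar K^\DD$), which is where the $\DD$-constrainedness of constants in $\bar K$ and Fact~\ref{ConstFact}(2) must be combined carefully; and then citing the correct triviality statement for $\delta$-algebraic torsors under (possibly non-connected) $\delta$-algebraic groups over an algebraically closed base — the non-connected case may require first splitting off a finite part, which is exactly why the hypothesis is ``$K^\DD$ algebraically closed'' rather than merely ``$\delta$-closed constants of $\bar K$ coincide with $K^\DD$''. Since this is Wibmer's theorem quoted verbatim, I would in fact refer to \cite{Wibmer} for the technical heart and only indicate how the above reduction places us in his setting.
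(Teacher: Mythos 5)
Your step (3) rests on a triviality principle that is false in the relevant generality: torsors under $\delta$-\emph{algebraic} groups over an algebraically closed $\delta$-field need not have rational points. The Kolchin/Springer-type results you allude to concern algebraic torsors under algebraic groups; here the Galois group is a $\delta$-algebraic subgroup of $\operatorname{GL}_n(\mathcal U^\DD)$, and the object you want to trivialize is a $\delta$-algebraic torsor over $C=\overline{K^\DD}$. Already for the connected $\delta$-algebraic group $\{x:\delta x=0\}$ inside the additive group, the torsor $\{x:\delta x=c\}$ with $C=\overline{\mathbb Q(t)}$, $\delta=d/dt$, $c=1/t$ has no $C$-point, even though $C$ is algebraically closed. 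To split such torsors one needs $C$ to be $\delta$-closed (or constrained closed), which is precisely the stronger Cassidy--Singer hypothesis that Wibmer's theorem is designed to avoid. So the reduction you sketch does not land in a setting where the conclusion follows, and deferring ``the technical heart'' to \cite{Wibmer} leaves the actual content of the statement unproved.

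The paper's own argument (Theorem~\ref{genwib}, specialized as in Remark~\ref{remonwib}) is entirely different and avoids torsors. Take $\bar K$ to be a differential closure of $K$ with respect to $\DD$ \emph{alone}, discarding $\delta$. Every $\DD$-constant of $\bar K$ is then constrained, i.e.\ algebraic, over $K^\DD$, so $\bar K^\DD=K^\DD$ by algebraic closedness --- this is the only place the hypothesis enters. Since $a$ is integrable, $(\star)$ has a solution $\alpha$ in $\bar K$. One then extends the single derivation $\delta$ from $K$ to all of $\bar K$, commuting with $\DD$, by Kolchin's extension theorem (Fact~\ref{extKol}), and sets $L=K\gen{\alpha}_{\delta'}$; then $L^\DD\subseteq\bar K^\DD=K^\DD$. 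The restriction to one parametric derivation is exactly because Fact~\ref{extKol} extends only one derivation at a time. If you want to repair your outline, this derivation-extension trick is what should replace step (3).
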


It should be noted that Wibmer works in the $\delta$-parameterized Picard-Vessiot context with systems of linear \emph{difference-differential} equations. Thus, Theorem \ref{wib} is a special case (when the set of automorphisms is empty) of the main result of \cite{Wibmer}.

Our next goal is to extend Theorem \ref{wib} to an arbitrary set of parametric derivations $\Delta$, and $G$ not necessarily linear. To do so, we will need the following result, see \cite[Chapter 0, \S7]{Kolchin2}.

\begin{fct}\label{extKol}
Suppose $(F,\bar \partial\cup\{\delta\})$ is a differential field with $\bar\partial =\{\partial_1,\dots,\partial_s\}$ (in particular we require that all the derivations commute). If $(E,\bar\partial)$ is a differential field extension of $(F,\bar\partial)$ which is $\bar\partial$-closed, then there exists an extension $\delta':E\to E$ of $\delta$ such that $(E,\bar\partial\cup\{\delta'\})$ is a differential field (i.e., $\delta'$ commutes with $\bar\partial$).  
\end{fct}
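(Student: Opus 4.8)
The plan is to construct $\delta'$ by a maximality argument carried out inside $E$. First I would invoke Zorn's lemma to fix a pair $(L,\delta_L)$, maximal for the natural ordering, such that $F\subseteq L\subseteq E$ is a $\bar\partial$-subfield and $\delta_L\colon L\to L$ extends $\delta$ and commutes with every $\partial_i$ on $L$; unions of chains provide upper bounds, so such a maximal pair exists. The goal then becomes to show $L=E$: assuming there is $a\in E\setminus L$, I would extend $\delta_L$ to a $\bar\partial$-subfield of $E$ strictly containing $L$, contradicting maximality.

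When $a$ is $\bar\partial$-transcendental over $L$ this is routine: $L\langle a\rangle_{\bar\partial}$ is the fraction field of the polynomial ring $L[\theta a:\theta\in\Theta_{\bar\partial}]$ in the algebraically independent generators $\theta a$, and one extends $\delta_L$ to this ring by setting $\delta_L(a):=0$ and, recursively on $\operatorname{ord}\theta$, $\delta_L(\partial_i\eta\,a):=\partial_i(\delta_L(\eta a))$; the commutativity of the $\partial_i$ makes this well defined and produces a derivation commuting with each $\partial_i$, which then extends uniquely to $L\langle a\rangle_{\bar\partial}\subseteq E$.

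The substantive case, and the place where $\bar\partial$-closedness of $E$ is essential, is when $a$ is $\bar\partial$-algebraic over $L$. Writing out the requirements that a putative extension of $\delta_L$ commute with the $\partial_i$ and annihilate the defining $\bar\partial$-polynomials of $a$ over $L$, one sees that $\delta_L(a)$ is forced to be a solution of a certain inhomogeneous linear $\bar\partial$-system over $L\langle a\rangle_{\bar\partial}$ of order bounded by the order of $a$; moreover this system is integrable, the compatibility of its equations for different indices following from $[\partial_i,\partial_j]=0$. Since $E$ is $\bar\partial$-closed it contains a solution, and I would choose a \emph{generic} one $c$, so that the only algebraic relations $c$ satisfies over $L\langle a\rangle_{\bar\partial}$ are those dictated by the system; then $\delta_L(a):=c$ determines a well-defined derivation of $L\langle a\rangle_{\bar\partial}$ with values in $L\langle a,c\rangle_{\bar\partial}\subseteq E$, commuting with $\bar\partial$. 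Extending $\delta_L$ further from $L\langle a\rangle_{\bar\partial}$ to $L\langle a,c\rangle_{\bar\partial}$ is a new instance of the same problem, so iterating over $\omega$ stages produces a $\bar\partial$-subfield $M$ of $E$ with $L\subsetneq M$ carrying an extension of $\delta_L$ that commutes with $\bar\partial$ --- contradicting maximality. Hence $L=E$, and $\delta':=\delta_L$ is the desired extension.

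I expect the main obstacle to be precisely the algebraic case: one must check carefully that the commutation conditions reduce to a \emph{finite-order, integrable} linear $\bar\partial$-system for $\delta_L(a)$, and that taking a generic solution inside $E$ introduces no relation that would obstruct well-definedness. This is a technical but elementary computation with characteristic sets, of the kind carried out in \cite[Chapter 0, \S7]{Kolchin2}.
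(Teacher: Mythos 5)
The paper offers no proof of this statement: it is imported wholesale from Kolchin \cite[Chapter 0, \S 7]{Kolchin2}, so there is no internal argument to compare yours against. Judged on its own terms, your outline has the right architecture --- Zorn's lemma, the split into the $\bar\partial$-transcendental and $\bar\partial$-algebraic cases, and the use of $\bar\partial$-closedness of $E$ to solve a linear system for $\delta_L(a)$, followed by an $\omega$-stage iteration to close up under the new derivation. The transcendental case and the iteration are fine. The problem is that the entire content of the fact sits inside one assertion you do not prove, and the reason you give for it is not the right one.

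Concretely: writing $P=I_{\bar\partial}(a/L)$, the value $c=\delta_L(a)$ must satisfy
$f^{\delta}(a)+\sum_{\theta,j}\frac{\partial f}{\partial(\theta x_j)}(a)\,\theta c_j=0$ for \emph{every} $f\in P$, since well-definedness of the extended derivation on $L\{a\}_{\bar\partial}\cong L\{x\}_{\bar\partial}/P$ is precisely the condition that this twisted differential of $P$ vanish at $(a,c)$. Before $\bar\partial$-closedness of $E$ can be invoked, one must show that this infinite system is \emph{consistent}, i.e.\ has a solution in some $\bar\partial$-extension. That does not follow from $[\partial_i,\partial_j]=0$, which only guarantees that the recursive assignment $\delta_L(\theta a)=\theta c$ is unambiguous; the real obstruction is compatibility with the algebraic relations in $P$. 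What is actually needed is (i) that the conditions coming from a characteristic set of $P$ entail those coming from all of $P$ --- this is where $[\delta,\partial_i]=0$ on $L$ and the reduction theory of prime $\bar\partial$-ideals enter --- and (ii) that the resulting finite inhomogeneous linear $\bar\partial$-system is consistent over $L\langle a\rangle_{\bar\partial}$, equivalently that the twisted prolongation of the $\bar\partial$-locus of $a$ has a point above $a$. You flag this as "the main obstacle" but defer it entirely to Kolchin, so the proof is a frame around the theorem rather than a proof of it. Two smaller points: requiring $c$ to be a \emph{generic} solution is unnecessary (any solution makes $\delta_L(a):=c$ well defined; extra relations satisfied by $c$ only matter at the next iteration stage, which is a fresh instance of the same problem) and is in general unattainable, since $E$ is only assumed $\bar\partial$-closed, not saturated over $L\langle a\rangle_{\bar\partial}$, and hence need not realize the generic type of the solution set.
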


\begin{rem}
It is not known (at least to the authors) if the above result holds if we replace $\delta$ for a finite family of derivations commuting with each other and with $\bar\partial$ on $F$.
\end{rem}

%We recall the folowing well known facts:

%\begin{fct}\label{isoprop} \
%\begin{enumerate}
%\item If $\DD\cup\Delta$-Clo$(K)$ denotes a differential closure $(K,\DD\cup\Delta)$, then every tuple $c$ of %$\DD$-constants from $\DD\cup\Delta$-Clo$(K)$ is $\Delta$-constrained over $K^\DD$.
%\item If a tuple $c=(c_1,\dots,c_n)$ is $\Delta$-constrained over a $\Delta$-field $(F,\Delta)$ which is %differentially closed, then $c_i\in F$ for $i=1,...,n$.
%\end{enumerate}
%\end{fct}

If $\Delta\neq \emptyset$, we set 
$$\Delta^*=\Delta\setminus\{\delta_1\}.$$
The following theorem gives sufficient conditions for the existence of parameterized stronly normal extensions.

\begin{thm}\label{genwib}
Suppose $G$ is a $\Delta^*$-algebraic group defined over $K^\DD$. If $(K^\DD,\Delta^*)$ is $\Delta^*$-closed, then there exists a parameterized strongly normal extension of $K$ for $(\star)$.
\end{thm}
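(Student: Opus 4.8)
The plan is to peel off the last parametric derivation $\delta_1$, apply the existence result recalled above (for the case of $\D$-closed $\DD$-constants) over the smaller family of derivations $\mathcal D\cup\Delta^*$ — where the field of $\DD$-constants is already $\Delta^*$-closed by hypothesis — and then reinstate $\delta_1$ by means of Kolchin's extension theorem, Fact~\ref{extKol}. (When $\Delta=\{\delta_1\}$, so that $\Delta^*=\emptyset$, the hypothesis reads ``$K^\DD$ algebraically closed'' and one recovers Theorem~\ref{wib}, now also for non-linear $G$.)

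Concretely, I would first regard all the relevant data in the $(\mathcal D\cup\Delta^*)$-differential setting: since $G$ is $\Delta^*$-algebraic over $K^\DD$, the parameterized D-group $(G,s_0)$, the integrable $K$-point $a$ and the logarithmic equation $(\star)$ all make sense as $(\mathcal D\cup\Delta^*)$-data — integrability, by Fact~\ref{intpts}, refers only to solvability of $\nabla_\DD y=a\cdot s_0(y)$, which does not involve $\delta_1$. Applying the existence result recalled above with $\D$ there replaced by $\Delta^*$, and using the hypothesis that $(K^\DD,\Delta^*)$ is $\Delta^*$-closed, I obtain a $(\mathcal D\cup\Delta^*)$-closure $\bar M$ of $K$ with $\bar M^\DD=K^\DD$, together with a solution $\alpha\in G(\bar M)$ of $(\star)$.

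Next I would reinstate $\delta_1$: by Fact~\ref{extKol} applied with $F=K$, $\bar\partial=\mathcal D\cup\Delta^*$, $\delta=\delta_1$ and $E=\bar M$, the derivation $\delta_1$ extends to a derivation $\delta_1'$ on $\bar M$ commuting with $\mathcal D\cup\Delta^*$, so that $(\bar M,\mathcal D\cup\Delta^*\cup\{\delta_1'\})$ is a $\Pi$-field. I would then set $L:=K\langle\alpha\rangle_\Delta\subseteq\bar M$, the $\Delta$-field generated by $\alpha$ over $K$ with $\delta_1$ read as $\delta_1'$. This $L$ is in fact a $\Pi$-subfield of $\bar M$: it is $\Delta$-closed by construction, and it is $\DD$-closed because, $\alpha$ being a solution of $(\star)$, each $D_i\alpha$ is a rational function of $\alpha$ over $K$, so that $D_i(\theta\alpha)=\theta(D_i\alpha)\in K\langle\alpha\rangle_\Delta$ for every $\theta\in\Theta_\Delta$ (the derivations commute and $K$ is a $\Delta$-field); since $\{x\in L: D_ix\in L\}$ is a subfield of $L$ containing the generators of $L$ over $K$, it is all of $L$. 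Finally, $K^\DD\subseteq L^\DD\subseteq\bar M^\DD=K^\DD$ forces $L^\DD=K^\DD$, so $L=K\langle\alpha\rangle_\Delta$ is a parameterized strongly normal extension of $K$ for $(\star)$.

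The step I expect to be the main obstacle is the control of the $\DD$-constants after passing to the $(\mathcal D\cup\Delta^*)$-closure, namely the equality $\bar M^\DD=K^\DD$: this is exactly where $\Delta^*$-closedness of $K^\DD$ is genuinely used, and it would not suffice merely to know that the $\DD$-constants of $K\langle\alpha\rangle_{\Delta^*}$ equal $K^\DD$, since the extension $\delta_1'$ need not preserve that smaller field. It is also the reason one peels off only a single derivation: the Remark following Fact~\ref{extKol} notes that its analog for a finite family of commuting derivations is open, so with this method assuming $(K^\DD,\Delta^*)$ is $\Delta^*$-closed — rather than some weaker condition on $K^\DD$ — seems unavoidable. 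A secondary point to be checked carefully is the compatibility of the $\Delta$- and $\Delta^*$-theoretic pictures (prolongations, integrable points, solution sets, sharp points) when moving between the two differential settings.
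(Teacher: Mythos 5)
Your proof is correct and follows essentially the same route as the paper's: pass to a $\DD\cup\Delta^*$-closure of $K$ (where $\Delta^*$-closedness of $K^\DD$ forces the $\DD$-constants not to grow), find a solution $\alpha$ of $(\star)$ there, reinstate $\delta_1$ via Fact~\ref{extKol}, and take $L=K\langle\alpha\rangle_\Delta$. Your explicit verification that $K\langle\alpha\rangle_\Delta$ is closed under $\DD$ is a detail the paper leaves implicit.
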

\begin{proof}
Let $\bar K$ be a $\DD\cup\Delta^*$-closure of $K$. Any tuple of $\DD$-constants from $\bar K$ is $\Delta^*$-constrained (or equivalently $\Delta^*$-isolated) over $K^\DD$ and so, by the assumption on $(K^\DD,\Delta^*)$, this tuple must be in $K^\DD$. In particular, $\bar K^\DD=K^\DD$. Now, by Fact~\ref{extKol}, there is an extension $\delta_1':\bar K\to \bar K$ of $\delta_1:K\to K$. This yields a differential field extension $(\bar K, \DD\cup\Delta')$ of $(K,\DD\cup\Delta)$ where $\Delta'=\{\delta_1'\}\cup\Delta^*$. 

Since $G$ is a $\Delta^*$-algebraic group, we can find a $\bar K$-point $\alpha\in G$ such that $\ell_0(\alpha)=a$. Let $L$ be the differential subfield of $(\bar K, \DD\cup\Delta')$ generated by $\alpha$ over $K$; in other words, $L=K\langle \alpha\rangle_{\Delta'}$. This is the desired PSN extension. Indeed, it is $\Delta'$-generated by a solution of $(\star)$ and $L^\DD$ is contained in $\bar K^\DD=K^\DD$.
\end{proof}

\begin{rem}\label{remonwib}\
\begin{enumerate}
\item When $G$ is a $\Delta^*$-algebraic group and $\Delta=\{\delta\}$, the above theorem shows that in order to guarantee the existence of a parameterized strongly normal extension, it suffices to assume that $K^\DD$ is algebraically closed.
\item When $G=\operatorname{GL}_n$ and $\Delta=\{\delta\}$, the above theorem shows that if $K^\DD$ is algebraically closed, then parameterized Picard-Vessiot extensions of $K$ exist. This is Theorem \ref{wib} above and, as we already mentioned, is (a special case of) the main result of \cite{Wibmer}.
\end{enumerate}
\end{rem}

The remainder of the paper is devoted to prove the existence of PSN extensions under the assumption that $(K^\DD,\D)$ is existentially closed in $(K,\D)$. We achieve this by extending some of the results of Kamensky and Pillay from \cite{MoshePillay}.

\section{A quantifier elimination result for parameterized D-groups}\label{QED}

The goal of this section is to proof a quantifier elmination result for certain theories of parameterized D-groups (cf. \cite[\S2]{MoshePillay}). This result will be used in the next section. We continue with the notation of the previous sections, and again fix $\Delta$-algebraic group $G$ defined over $K^{\mathcal{D}}$ and an integrable $K$-point $a$ of $(G,s_0)$. Recall that the latter means that $(G,s)$ is a parameterized D-variety, where $s:=as_0$. 

We fix the parameterized logarithmic equation
\[\ell_{0}(y)=a, \quad \text{ where } y \text{ ranges in } G \tag{$\star$},\]
and let $\mathcal{Y}=\{y\in G: \ell_{0}(y)=a\}$ be its solution set. Note that 
$$\mathcal Y=(G,s)^\#=b\cdot G(\mathcal U^\DD)=b \cdot \operatorname{Ker}(\ell_0)$$
for any $b\in \mathcal Y$.

For each $n,m\geq 0$, the product $\mathcal U^n\times  G^m$ has a natural structure of parameterized D-variety induced from the zero section on $\mathcal U$ and the section $s=as_0$ on $G$. By a parameterized D-subvariety of $\mathcal U^n\times G^m$ we mean one with respect to this parameterized D-structure. In particular, the set of sharp points of $\mathcal U^n\times G^m$ is $(\mathcal U^\DD)^n\times \mathcal Y^m$, and, by Lemma \ref{onsubvar} (2), a $\D$-subvariety $W$ of $\mathcal U^n\times G^m$ is a parameterized D-subvariety iff $W^\#:=W\cap \left((\mathcal U^\DD)^n\times \mathcal Y^m\right)$ is $\D$-dense in $W$.

The two-sorted language $\mathcal L_{D,K}$ has symbols $R_W$ for each parameterized D-subvariety $W$ of $\mathcal U^n\times G^m$ defined over $K$. We denote by $(\mathcal U, G)_{D,K}$ the $\mathcal L_{D,K}$-structure with sorts $\mathcal U$ and $G$ where the interpretation of $R_W$ is the tautological one, namely $W$. On the other hand, $(\mathcal U^\DD, \mathcal  Y)_{D,K}$ denotes the $\mathcal L_{D,K}$-structure with sorts $\mathcal U^\DD$ and $\mathcal Y$ where the interpretation of $R_W$ is $W^\#$.

\begin{rem}\label{propi} \
\begin{enumerate}
\item The parameterized D-subvarieties of $\mathcal U$ (with its zero section) and of $(G,s_0)$ are precisely those $\D$-subvarieties of $\mathcal U$ and $G$, respectively, that are defined over $\mathcal U^\DD$. A similar observation holds for parameterized D-subvarieties defined over $K$.
\item By quantifier elimination of $DCF_{0,m}$ and Lemma \ref{onsubvar} (2), every $\emptyset$-definable set of the structure $(\mathcal U^\DD,\mathcal Y)_{D,K}$ is a (finite) Boolean combination of $W^\#$'s where the $W$'s are parameterized D-subvarieties of some $\mathcal U^n\times G^m$ defined over $K$. In particular, the structure $(\mathcal U^\DD,\mathcal Y)_{D,K}$ has quantifier elimination.
\end{enumerate}
\end{rem}

Here is the main result of this section (cf. \cite[Corollary 2.5]{MoshePillay}).

\begin{thm}\label{QEthm}
The structure $(\mathcal{U},G)_{D,K}$ has quantifier elimination.
\end{thm}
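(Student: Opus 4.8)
The strategy is to reduce quantifier elimination for $(\mathcal{U},G)_{D,K}$ to the quantifier elimination already available for $(\mathcal{U}^{\mathcal D},\mathcal Y)_{D,K}$ (Remark~\ref{propi}(2)), using the fact that the sharp-points functor embeds the two-sorted ``$\mathcal D$-constant'' structure into the full one and that every $\mathcal L_{D,K}$-formula is, up to the relations $R_W$, a statement about Boolean combinations of parameterized D-subvarieties. Concretely, it suffices to show that for a parameterized D-subvariety $W\subseteq (\mathcal U^{n+1}\times G^{m+1})$ defined over $K$, the projection $\exists$-formula $\pi(W)$ (eliminating one variable, in either sort) is again equivalent, over $(\mathcal U,G)_{D,K}$, to a quantifier-free $\mathcal L_{D,K}$-formula, i.e.\ to a Boolean combination of relations $R_{W'}$ with $W'$ a parameterized D-subvariety over $K$. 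By an easy induction this gives full quantifier elimination.

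First I would set up the image under projection. Let $W$ be a parameterized D-subvariety of, say, $\mathcal U^n\times G^m\times G$, and let $V=\pi(W)\subseteq \mathcal U^n\times G^m$ be its projection (the other cases, projecting off a $\mathcal U$-coordinate, are analogous and easier). The set $V$ is definable in $(\mathcal U,\Pi)$ and defined over $K$; by quantifier elimination of $DCF_{0,m}$ it is a Boolean combination of $\Delta$-algebraic varieties over $K$. The key claim to establish is that in fact $V$ is a Boolean combination of \emph{parameterized} D-subvarieties over $K$ — once this is known, $V$ is the interpretation of a quantifier-free $\mathcal L_{D,K}$-formula and we are done. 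To prove the claim I would argue: (i) the Zariski-type closure of a projection of a parameterized D-subvariety is again a parameterized D-subvariety — here I would use that $\tau_{\mathcal D/\Delta}$ commutes with products and with the relevant projections, so that the $\Delta$-closure $\overline{V}$ of $V$ satisfies $s(\overline V)\subseteq \tau_{\mathcal D/\Delta}\overline V$, hence is a parameterized D-subvariety by Lemma~\ref{onsubvar}(2); (ii) $\overline V$ being defined over $K$ (it is the $\Delta$-closure of a $K$-definable set), Lemma~\ref{usil} lets us assume the D-subvarieties in any such Boolean combination are themselves defined over $K$; and (iii) the ``boundary'' $\overline V\setminus V$ has strictly smaller dimension and is again $K$-definable, so induction on rank (Morley rank, exactly as in the proof of Lemma~\ref{usil}) finishes the argument.

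The genuine content — and what I expect to be the main obstacle — is step (i): showing that the $\Delta$-closure of the projection of $W^\#$ agrees with the projection of the $\Delta$-closure of $W$, equivalently that $\pi(W^\#)$ is $\Delta$-dense in $\pi(\overline W)$. This is where Proposition~\ref{uselater} enters decisively: since $W$ is a Boolean combination of parameterized D-subvarieties (indeed it is one, but the argument wants the Boolean-combination form after slicing), $W^\#$ is $\Delta$-dense in $W$; applying the $\Delta$-continuous projection $\pi$ preserves density, so $\pi(W^\#)$ is $\Delta$-dense in $\pi(W)$, hence in $\overline{\pi(W)}=\overline V$. Combined with Fact~\ref{ConstFact}(2) — which guarantees that $W^\#$, as a subset of Cartesian powers of $\mathcal U^{\mathcal D}$ and $\mathcal Y = b\cdot G(\mathcal U^{\mathcal D})$, is exactly what it should be and that definability over $K$ descends to definability over $K^{\mathcal D}$ — this pins down $\overline V$ as a parameterized D-subvariety over $K$. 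I would be careful here about the two sorts: projecting off the $G$-coordinate uses that $\mathcal Y$ is a coset of $G(\mathcal U^{\mathcal D})$ and that $\tau_{\mathcal D/\Delta}G = (T_\Delta G)^r$ behaves well under the group structure, so that $\tau_{\mathcal D/\Delta}$ of a projection map between the ambient parameterized D-varieties is again the corresponding projection; this is the one spot where I would want to double-check the compatibility of the D-structure with the projection morphism rather than wave it through. Once the claim is proved, the theorem follows formally: every $\mathcal L_{D,K}$-formula reduces, by eliminating quantifiers one at a time via the claim and Lemma~\ref{usil}, to a Boolean combination of relations $R_{W'}$ with each $W'$ a parameterized D-subvariety over $K$, which is precisely quantifier elimination for $(\mathcal U,G)_{D,K}$.
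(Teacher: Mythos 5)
Your reduction to showing that coordinate projections of (Boolean combinations of) parameterized D-subvarieties are again Boolean combinations of parameterized D-subvarieties over $K$ is the right target, and two of your ingredients match the paper: the descent to $K$ via Lemma~\ref{usil} at the end, and the observation that $\pi(W^\#)\subseteq V^\#$ is $\Delta$-dense in $V=\pi(W)$, which by Lemma~\ref{onsubvar}(2) shows that the $\Delta$-closure $\overline V$ is a parameterized D-subvariety. But there is a genuine gap at the step you yourself flag as the main content. Knowing that $\overline V$ is a parameterized D-subvariety and that $V^\#$ is dense in $V$ does \emph{not} imply that $V$ itself is a Boolean combination of parameterized D-subvarieties: density of sharp points is necessary but not sufficient. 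For a concrete obstruction, take $\DD=\{D\}$, $\D=\emptyset$, $G=\mathbb G_a$ with section $as_0$, and $V=G\setminus\{c\}$ with $c\notin\mathcal Y$; then $V^\#=G^\#$ is dense in $V$ and $\overline V=G$ is a parameterized D-subvariety, yet $V$ is not a Boolean combination of parameterized D-subvarieties, since the only proper ones are finite subsets of $\mathcal Y$. The whole content of the theorem is to rule out such sets as projections, and your argument never engages with this. The proposed induction on Morley rank does not close up either: in Lemma~\ref{usil} the boundary $\bar A\setminus A$ is \emph{already known} to be a Boolean combination of parameterized D-subvarieties (only the field of definition is at issue), whereas your boundary $\overline V\setminus V$ is merely a $K$-definable set of smaller rank --- it is not a projection of a parameterized D-subvariety and has no a priori density of sharp points, so the inductive hypothesis does not apply to it.

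The missing idea is the paper's translation trick. Fix $b\in\mathcal Y$ and let $\lambda$ be left-translation by $b^{-1}$; since $\mathcal Y=b\cdot G(\mathcal U^\DD)$, Lemma~\ref{onsubvar}(2) and Remark~\ref{propi}(1) show that a $\D$-subvariety $B$ of $\mathcal U^n\times G^m$ is a parameterized D-subvariety if and only if $(\operatorname{Id}^n,\lambda^m)(B)$ is defined over $\mathcal U^\DD$. This converts the geometric condition ``parameterized D-subvariety'' into the purely field-of-definition condition ``defined over $\mathcal U^\DD$ in $(\mathcal U,\D)$,'' which \emph{is} preserved by projections thanks to quantifier elimination in $DCF_{0,m-r}$ with parameters from $\mathcal U^\DD$. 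Translating back and then applying Lemma~\ref{usil} finishes the proof, and in particular handles the boundary $\overline V\setminus V$ automatically. Without some device of this kind (or the jet-space machinery of Kowalski--Pillay, which the authors note is unavailable here), your argument establishes only the closure statement, not the theorem.
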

\begin{proof} 
This is an extension of the arguments of \cite[Lemma 2.4 and Corollary 2.5]{MoshePillay}. Let $A\subseteq \mathcal U^n\times G^m$ be any constructible set of the structure $(\mathcal U, G)_{D,K}$ and $\pi:\mathcal U^n\times G\to \mathcal U^{n'}\times G^{m'} $ a coordinate-projection map. We must show that $\pi(A)$ is again constructible; i.e., that $\pi(A)$ is a (finite) Boolean combination of parameterized D-subvarieties of $\mathcal U^{n'}\times G^{m'}$ which are defined over $K$. 

We first check that $\pi(A)$ is a Boolean combination of parameterized D-varieties (defined over $\mathcal U$). Fix $b\in \mathcal Y$. Let $\lambda:G\to G$ be left-translation by $b^{-1}$. Since $\mathcal Y=b\cdot G(\mathcal U^\DD)$, the map $\lambda$ induces a bijection between $\mathcal Y$ and $G(\mathcal U^\DD)$. By Proposition \ref{onsubvar} (2) and Remark \ref{propi} (1), a $\D$-subvariety $B$ of $\mathcal U^n\times G^m$ is a parameterized D-subvariety iff the $\D$-subvariety $B':=(\operatorname{Id}^n, \lambda^m)(B)$ is defined over $\mathcal U^\DD$. Consequently, $A':=(\operatorname{Id}^n,\lambda^m)(A)$ is a Boolean combination of $\D$-subvarieties of $\mathcal U^n\times G^m$ defined over $\mathcal  U^\DD$. By quantifier elimination of $DCF_{0,m-r}$ (applied in the structure $(\mathcal U, \D)$), $\pi(A')$ is a Boolean combination of $\D$-subvarieties of $\mathcal U^{n'}\times G^{m'}$ defined over $U^{\DD}$. Applying the inverse of $(\operatorname{Id}^{n'},\lambda^{m'})$, we obtain that $\pi(A)$ is a Boolean combination parameterized D-subvarieties of $\mathcal U^{n'}\times G^{m'}$ defined over $\mathcal U$.

Finally, since $\pi(A)$ is definable over $K$, Lemma \ref{usil} implies that $\pi(A)$ is a Boolean combination of parameterized D-subvarieties all defined over $K$.
\end{proof}

We end this section with an easy consequence of the above theorem (and Proposition \ref{uselater}). 

\begin{cor}\label{Interpretation}
$(\mathcal{U}^{\mathcal{D}},\mathcal{Y})_{D,K}$ is an elementary substructure of $(\mathcal{U},G)_{D,K}$. Consequently, the map assigning to each symbol $R_W \in L_{D,K}$ the formula (over $K$) defining $W$ in $(\mathcal{U},\Delta)$ is an interpretation of $Th(\mathcal{U}^{\mathcal{D}},\mathcal{Y})_{D,K}$ in $DCF_{0,m-r,K}$.
\end{cor}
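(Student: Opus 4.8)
The plan is to deduce this corollary from the quantifier elimination result of Theorem \ref{QEthm} together with Remark \ref{propi} (2), which gives quantifier elimination for the substructure $(\mathcal{U}^{\mathcal{D}},\mathcal{Y})_{D,K}$. First I would observe that $(\mathcal{U}^{\mathcal{D}},\mathcal{Y})_{D,K}$ is genuinely a substructure of $(\mathcal{U},G)_{D,K}$: the universe of the first sort, $\mathcal{U}^{\mathcal{D}}$, is an $\mathcal{L}_{D,K}$-substructure of $\mathcal{U}$ and $\mathcal{Y}=(G,s)^\#$ sits inside $G$, and for every symbol $R_W$ the interpretation in the smaller structure is $W^\# = W \cap \big((\mathcal{U}^{\mathcal{D}})^n \times \mathcal{Y}^m\big)$, which is exactly the restriction of $W$ (the interpretation in the larger structure) to the smaller sorts. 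So the inclusion is an $\mathcal{L}_{D,K}$-embedding.

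Next, to get that it is an \emph{elementary} substructure, the standard route is: both structures eliminate quantifiers (the big one by Theorem \ref{QEthm}, the small one by Remark \ref{propi} (2)), so it suffices to check that the inclusion is existentially closed, i.e. that any quantifier-free $\mathcal{L}_{D,K}$-formula $\varphi(x,\bar b)$ with parameters $\bar b$ from $(\mathcal{U}^{\mathcal{D}},\mathcal{Y})$ which has a solution $x$ in the large structure already has one in the small structure. By quantifier elimination in $(\mathcal{U},G)_{D,K}$, the set $A$ defined by $\varphi(x,\bar b)$ is a Boolean combination of (fibres over $\bar b$ of) parameterized D-subvarieties; since $\bar b$ is a tuple of sharp points, these fibres are themselves parameterized D-subvarieties of the appropriate $\mathcal{U}^n \times G^m$. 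Hence $A$ is a Boolean combination of parameterized D-subvarieties, and by Proposition \ref{uselater} its sharp locus $A^\#$ is $\Delta$-dense in $A$; in particular $A \neq \emptyset$ forces $A^\# \neq \emptyset$. But $A^\#$ is precisely the solution set of $\varphi$ in $(\mathcal{U}^{\mathcal{D}},\mathcal{Y})_{D,K}$. This gives the existential-closedness and hence, by the Tarski–Vaught-style criterion for structures with quantifier elimination, $(\mathcal{U}^{\mathcal{D}},\mathcal{Y})_{D,K} \preccurlyeq (\mathcal{U},G)_{D,K}$.

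For the ``consequently'' clause, I would argue as follows. The map sending $R_W$ to the $\mathcal{L}_\Delta$-formula over $K$ defining $W$ in $(\mathcal{U},\Delta)$ assigns to each basic relation symbol of $\mathcal{L}_{D,K}$ a formula of $DCF_{0,m-r,K}$; extending by the usual inductive clauses on Boolean connectives and quantifiers gives a translation of every $\mathcal{L}_{D,K}$-formula into an $\mathcal{L}_\Delta$-formula over $K$. To see this is an interpretation of $Th(\mathcal{U}^{\mathcal{D}},\mathcal{Y})_{D,K}$ in $DCF_{0,m-r,K}$, note that the two sorts are interpreted by the $\Delta$-definable sets $\mathcal{U}^{\mathcal{D}}$ (which, by Fact \ref{ConstFact}, with its induced structure is exactly $(\mathcal{U}^{\mathcal{D}},\Delta) \models DCF_{0,m-r}$) and $\mathcal{Y} = (G,s)^\#$, and that under this translation the interpretation of $R_W$ becomes $W^\#$, matching the structure $(\mathcal{U}^{\mathcal{D}},\mathcal{Y})_{D,K}$ by Lemma \ref{onsubvar} (2) and Remark \ref{propi} (1). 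The compatibility for atomic formulas then propagates to all formulas by induction on complexity; in particular sentences true in $(\mathcal{U}^{\mathcal{D}},\mathcal{Y})_{D,K}$ translate to sentences deducible from $DCF_{0,m-r,K}$, which is the definition of an interpretation.

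The main obstacle I anticipate is the bookkeeping in the existential-closedness step: one must be careful that when parameters $\bar b$ from the sharp sorts are substituted into a parameterized D-subvariety $W$ of $\mathcal{U}^{n+n'}\times G^{m+m'}$, the resulting fibre $W_{\bar b}$ is again a parameterized D-subvariety (not merely a $\Delta$-subvariety), so that Proposition \ref{uselater} applies — this uses exactly that sharp points are the only points of the relevant fibre-defining sections, i.e. that $\nabla_{\mathcal{D}}\bar b = s(\bar b)$ since $\bar b$ is sharp. Everything else is the routine quantifier-elimination-implies-elementarity argument and a straightforward induction for the interpretation.
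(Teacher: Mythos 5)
Your proposal is correct and follows essentially the same route as the paper: the paper also verifies the Tarski--Vaught criterion by using Theorem \ref{QEthm} to write a definable set as a Boolean combination of parameterized D-subvarieties and then invoking Proposition \ref{uselater} to find a sharp point. You are somewhat more explicit than the paper on two points it leaves implicit --- that fibres of parameterized D-subvarieties over sharp parameters are again parameterized D-subvarieties, and the routine induction for the ``consequently'' clause --- but these are elaborations of the same argument, not a different one.
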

\begin{proof}
Let $A$ be a nonempty definable set of $(\mathcal U,G)_{D,K}$. By the usual Tarski-Vaught test, it suffices to show that $A$ has a point in $(\mathcal U^\DD, \mathcal Y)_{D,K}$. By Theorem \ref{QEthm}, $A$ is a (finite) Boolean combination of parameterized D-subvarieties of some $\mathcal U^n\times G^m$. But, by Proposition \ref{uselater}, $A^\#$ is dense in $A$; in particular, $A^\#\subset (\mathcal U^\DD)^n \times \mathcal Y^m $ is nonempty. The result follows.
\end{proof}

\begin{rem}
It should be noted that in the ODE case algebraic D-groups (not necessarily defined over the constants) have quantifier elimination \cite{KoPi}. This result has an immediate extension to the PDE case when the parametric set of derivations $\D$ is empty (these are the algebraic D-groups studied by Buium \cite{Bu}). However, in general, $\D\neq \emptyset$, the situation is quite different. For instance, the arguments from \cite[\S2]{KoPi} rely heavily on the properties of Jet spaces and Grassmanians for algebraic varieties, and at the moment it is unclear how such properties will translate for $\Delta$-algebraic varieties. We leave the question of quantifier elimination of parameterized D-groups for future work.
\end{rem}

\section{Existence of parameterized strongly normal extensions}
In this section we prove our main result concerning the existence of parameterized strongly normal extensions. Our assumptions are similar to those in the previous section: $K$ denotes our ground $\Pi$-field and $G$ is a $\Delta$-algebraic group defined over $K^{\mathcal{D}}$. We fix an integrable $K$-point $a$ of $(G,s_0)$ and a parameterized logarithmic equation
\[\ell_{0}(y)=a, \quad \text{ where } y \text{ ranges in } G. \tag{$\star$}\]
We let $\mathcal{Y}=\{y\in G: \ell_{0}(y)=a\}$ be the solution set of $(\star)$. 

We start with some key model theoretic results and, as we shall see, most follow by adapting the proof of similar results found in \cite[\S3]{MoshePillay} and using the results of previous sections (primarily Corollary \ref{Interpretation}).
 
\begin{lem}{\color{white}kn}\label{onty}
\begin{enumerate}
\item $\mathcal{Y}$ is contained in any $\Pi$-closure of $K\gen {\mathcal{U}^{\mathcal{D}}}_\D$. In particular, for all $\alpha\in\mathcal{Y}$, $tp(\alpha/K\gen {\mathcal{U}^{\mathcal{D}}}_\D)$ is isolated.
\item For each complete type $p(x)$ over $K$ containing ``$x\in\mathcal{Y}$", there is a formula $\phi_p(x,y)$ and a definable partial function $f_p(x)$, both defined over $K$ in the language of $\Pi$-rings, such that for each realisation $\alpha$ of $p$, $f_p(\alpha)$ is a tuple from $\mathcal{U}^{\mathcal{D}}$ and $\phi_p(x,f_p(\alpha))$ isolates $tp(\alpha/K\gen {\mathcal{U}^{\mathcal{D}}}_\D)$.
\end{enumerate}
\end{lem}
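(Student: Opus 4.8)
The strategy is to reduce both parts to statements about the interpreted structure $(\mathcal U^\DD,\mathcal Y)_{D,K}$ via Corollary~\ref{Interpretation}, and then to import the standard isolation facts about strongly normal extensions (as in \cite[\S3]{MoshePillay}). For part (1), the key point is that $\mathcal Y = b\cdot G(\mathcal U^\DD)$ for any $b\in\mathcal Y$. Fix a solution $\alpha\in\mathcal Y$; then $\alpha = b\cdot c$ for a $\DD$-constant tuple $c\in G(\mathcal U^\DD)$. Working in a $\Pi$-closure $\overline{K\gen{\mathcal U^\DD}_\D}$, one uses that the image of $\ell_0$ contains the $K$-point $a$, so a solution $\beta$ of $(\star)$ exists in this $\Pi$-closure; since $\beta^{-1}\alpha \in G(\mathcal U^\DD)$ and $\mathcal U^\DD$ already lies in $K\gen{\mathcal U^\DD}_\D$, we get $\alpha = \beta\cdot(\beta^{-1}\alpha)$ is a tuple from the $\Pi$-closure. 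Hence $\mathcal Y$ lies inside every $\Pi$-closure of $K\gen{\mathcal U^\DD}_\D$, and consequently $tp(\alpha/K\gen{\mathcal U^\DD}_\D)$ is isolated (types realized in a differential closure are isolated, by $\omega$-stability of $DCF_{0,m}$).

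For part (2), the plan is to run the argument of \cite[Lemma 3.1]{MoshePillay} through our interpretation. Let $p(x)$ be a complete type over $K$ containing ``$x\in\mathcal Y$''. The point is that $tp(\alpha/K)$ determines a type in the interpreted structure $(\mathcal U^\DD,\mathcal Y)_{D,K}$, and by Corollary~\ref{Interpretation} every such type is, up to the interpretation, a type of $DCF_{0,m-r}$ over $K^\DD$; there, by $\omega$-stability and definability of types, there is a formula together with the canonical parameter (the code of the type-definition), which we pull back through the interpretation. Concretely: using the quantifier elimination of $(\mathcal U^\DD,\mathcal Y)_{D,K}$ from Remark~\ref{propi}(2), $tp(\alpha/K\gen{\mathcal U^\DD}_\D)$ restricted to this structure is generated by instances of the $R_W$'s with $\DD$-constant parameters; the finitely many needed $\DD$-constant parameters are extracted from $\alpha$ by a $K$-definable partial function $f_p$ (this is where one uses that the relevant coordinates of $\alpha$, or $\ell_0$-type data, determine the canonical parameters), and $\phi_p(x,y)$ is the formula in the language of $\Pi$-rings expressing the corresponding conjunction of $R_W$-instances via the interpretation. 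One then checks $\phi_p(x,f_p(\alpha))$ isolates $tp(\alpha/K\gen{\mathcal U^\DD}_\D)$ by combining part (1) (isolation) with the fact that the interpreted structure is an elementary substructure, so the isolating formula there lifts to an isolating formula over $K\gen{\mathcal U^\DD}_\D$.

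The main obstacle I expect is the bookkeeping in part (2): one must produce $f_p$ and $\phi_p$ \emph{uniformly in $p$} and, crucially, \emph{over $K$} (not over $K\gen{\mathcal U^\DD}_\D$), while the isolating data naturally lives over $\mathcal U^\DD$. The trick — exactly as in \cite{MoshePillay} — is that the canonical parameter of the type-definition, being a $\DD$-constant tuple whose coordinates are interalgebraic-over-$K$ with $\alpha$ in a controlled way, can be recovered by a $K$-definable function applied to $\alpha$; verifying this requires invoking Fact~\ref{ConstFact}(2) (so that being a $\DD$-constant tuple definable over $K\gen{\mathcal U^\DD}_\D$ descends to definability over $K^\DD$ in $(\mathcal U^\DD,\D)$) together with the fact, from Lemma~\ref{ongalgro}-style arguments, that a formula isolating $tp(\alpha/K)$ also isolates $tp(\alpha/K\gen{\mathcal U^\DD}_\D)$, so the extra parameters really do come from $\alpha$ itself. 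Once this descent is in place, the remaining verifications are routine.
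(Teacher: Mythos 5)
Your part (1) is fine and supplies the detail the paper dismisses as ``clear'': a solution $\beta$ of $(\star)$ exists in any $\Pi$-closure of $K\gen{\mathcal{U}^{\mathcal{D}}}_\D$ because the system is solvable in $\mathcal U$, every $\alpha\in\mathcal Y$ is $\beta\cdot c$ with $c\in G(\mathcal U^{\mathcal D})$ already in the base field, and isolation then follows from atomicity of prime models in the $\omega$-stable theory $DCF_{0,m}$.

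Part (2) contains a genuine gap at the crucial step. The whole content of (2) is that the parameter $c$ of the isolating formula lies in $\operatorname{dcl}(K\alpha)$, so that it is the value of a $K$-definable partial function at $\alpha$. You justify this by asserting, ``from Lemma~\ref{ongalgro}-style arguments,'' that a formula isolating $tp(\alpha/K)$ already isolates $tp(\alpha/K\gen{\mathcal{U}^{\mathcal{D}}}_\D)$. That claim is proved in Lemma~\ref{ongalgro} only under the hypothesis that $K\gen{\alpha}_\D$ is a PSN extension, i.e.\ $(K\gen{\alpha}_\D)^{\mathcal D}=K^{\mathcal D}$, and that is precisely what is \emph{not} available for an arbitrary $\alpha\in\mathcal Y$; if it were true for every $\alpha\in\mathcal Y$, the canonical parameter would land in $\operatorname{dcl}(K)$, Lemma~\ref{Const} would force $(K\gen{\alpha}_\D)^{\mathcal D}=K^{\mathcal D}$ for every solution, and PSN extensions would always exist, contradicting the Seidenberg-type example in the introduction. (Note also that ``interalgebraic'' is too weak here: a definable function needs definable, not algebraic, closure, and the canonical base of a type-definition is in general only definably closed in a Morley sequence, not in a single realization.) The correct and short argument, which is what the paper's one-line appeal to elimination of imaginaries in $DCF_{0,m}$ encodes, is: since $tp(\alpha/K\gen{\mathcal{U}^{\mathcal{D}}}_\D)$ is isolated, its set $X$ of realizations is definable; any $\sigma\in\operatorname{Aut}(\mathcal U/K\cup\{\alpha\})$ fixes $K\gen{\mathcal{U}^{\mathcal{D}}}_\D$ setwise (as it preserves $\mathcal U^{\mathcal D}$) and fixes $\alpha$, hence fixes $X$ setwise, so the canonical parameter $c$ of $X$ lies in $\operatorname{dcl}(K\alpha)$; uniformity of $f_p$ and $\phi_p$ in $p$ then follows by homogeneity of $\mathcal U$. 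Your detour through the two-sorted structure and Corollary~\ref{Interpretation} is not needed for this lemma --- the paper reserves the interpretation machinery for the existence theorem itself --- and as written it does not repair the missing $\operatorname{dcl}(K\alpha)$ step.
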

\begin{proof} \

\noindent (1) This is clear.

\noindent (2) For any $\alpha\in\mathcal{Y}$, by elimination of imaginaries in $DCF_{0,m}$, we may assume that the formula $\phi(x,c)$ that isolates $tp(\alpha/K\gen {\mathcal{U}^{\mathcal{D}}}_\D)$ is such that $c$ (a tuple from $\mathcal{U}^{\mathcal{D}}$) is a canonical parameter for $\phi(x,c)$ over $K$. So we have a $K$-definable partial function $f_p$, depending only on $p=tp(\alpha/K)$, so that $c=f_p(\alpha)$. As the formula $\phi(x,y)$ also depends only on $p$, we write it as $\phi_p(x,y)$.
\end{proof}

One of the main outcomes of the above lemma is the following:

\begin{lem}\label{Const} 
Let $p(x)$ be a complete type over $K$ containing ``$x\in\mathcal{Y}$". Suppose $c=f_p(\alpha)$ for some $\alpha$ realizing $p$, where $f_p$ is as given in Lemma~\ref{onty}. Then $(K\gen{\alpha}_{\Delta})^{\mathcal{D}}=K^{\mathcal{D}}\gen{c}_{\Delta}$.
\end{lem}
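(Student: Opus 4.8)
The plan is to prove the two inclusions $K^{\mathcal D}\gen{c}_\Delta \subseteq (K\gen{\alpha}_\Delta)^{\mathcal D}$ and $(K\gen{\alpha}_\Delta)^{\mathcal D}\subseteq K^{\mathcal D}\gen{c}_\Delta$ separately. The first inclusion is the easy one: by construction $c=f_p(\alpha)$ is a tuple from $\mathcal U^{\mathcal D}$, and $f_p$ is $K$-definable, so $c$ lies in $K\gen{\alpha}_\Pi$; but by Lemma~\ref{onty}(1) (or rather the isolation it provides together with the defining property of a PSN-type solution set), $\alpha$ lies in a $\Pi$-closure of $K\gen{\mathcal U^{\mathcal D}}_\Delta$, and in fact one checks $K\gen{\alpha}_\Pi = K\gen{\alpha}_\Delta$ here because $\mathcal Y=(G,s)^\#$ consists of sharp points — applying any $D\in\mathcal D$ to $\alpha$ stays inside $K\gen{\alpha}_\Delta$ since $\nabla_{\mathcal D}\alpha = s(\alpha)=a\cdot s_0(\alpha)$ is a $\Delta$-rational expression in $\alpha$ over $K$. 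Hence $c\in K\gen{\alpha}_\Delta$, and since each coordinate of $c$ is a $\mathcal D$-constant, $c\in (K\gen{\alpha}_\Delta)^{\mathcal D}$; as the latter is a $\Delta$-field containing $K^{\mathcal D}$ and $c$, we get $K^{\mathcal D}\gen{c}_\Delta\subseteq (K\gen{\alpha}_\Delta)^{\mathcal D}$.

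For the reverse inclusion, let $d\in (K\gen{\alpha}_\Delta)^{\mathcal D}$; I want to show $d\in K^{\mathcal D}\gen{c}_\Delta$. The key point is that $\phi_p(x,c)$ isolates $tp(\alpha/K\gen{\mathcal U^{\mathcal D}}_\Delta)$, and $d$ is a $\Delta$-rational function of $\alpha$ over $K$, say $d=g(\alpha)$ for a $K$-definable $\Delta$-partial-function $g$. Consider any $\Pi$-automorphism $\sigma$ of $\mathcal U$ fixing $K^{\mathcal D}\gen{c}_\Delta$ pointwise (equivalently, fixing $K\gen{\mathcal U^{\mathcal D}}_\Delta$ — or at least $K\gen{c}_\Delta$ together with $K$ — pointwise; one must be a little careful here, and the cleanest route is to work with automorphisms over $K\gen{\mathcal U^{\mathcal D}}_\Delta$ and note that $c$ is fixed). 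Since $\phi_p(x,c)$ isolates $tp(\alpha/K\gen{\mathcal U^{\mathcal D}}_\Delta)$ and $\sigma$ fixes the parameters, $\sigma(\alpha)$ again realizes this type, so $\sigma(\alpha)=\alpha\cdot h$ for some $h\in G(\mathcal U^{\mathcal D})$ (using the PSN structure as in Remark~\ref{remonex}(2)). Then $\sigma(d)=g(\sigma(\alpha))=g(\alpha\cdot h)$; but $d\in\mathcal U^{\mathcal D}$ is a $\mathcal D$-constant, and $\sigma$ fixes $\mathcal U^{\mathcal D}$ setwise while fixing $K\gen{\mathcal U^{\mathcal D}}_\Delta$ — the crucial observation is that $d=g(\alpha)$ being a $\mathcal D$-constant means $d$ is already determined by $tp(\alpha/K\gen{\mathcal U^{\mathcal D}}_\Delta)$, hence $\sigma(d)=d$. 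Thus $d$ is fixed by every $\sigma\in\operatorname{Aut}_\Pi(\mathcal U/K\gen{\mathcal U^{\mathcal D}}_\Delta)$, but also $d\in\mathcal U^{\mathcal D}$; combining, $d$ is fixed by every element of $\operatorname{Aut}_\Delta(\mathcal U^{\mathcal D}/K^{\mathcal D}\gen{c}_\Delta)$. By Fact~\ref{ConstFact} and the model-completeness / Galois theory of $DCF_{0,m-r}$ — more precisely, since $d\in\mathcal U^{\mathcal D}$ is definable (over $K^{\mathcal D}\gen{c}_\Delta$, via the interpretation of Corollary~\ref{Interpretation}) and fixed by all automorphisms over $K^{\mathcal D}\gen{c}_\Delta$, it must lie in the (relative) algebraic, indeed definable, closure, and in $DCF_{0,m-r}$ definable closure of a $\Delta$-field is that field — we conclude $d\in K^{\mathcal D}\gen{c}_\Delta$.

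The main obstacle I anticipate is the bookkeeping around \emph{which} field of parameters and \emph{which} automorphism group to use: one needs that (i) $\phi_p(x,c)$ isolates the type of $\alpha$ over $K\gen{\mathcal U^{\mathcal D}}_\Delta$ — given by Lemma~\ref{onty}(2) — so that automorphisms fixing the parameters move $\alpha$ only by $G(\mathcal U^{\mathcal D})$-translation, and (ii) a $\mathcal D$-constant in $K\gen{\alpha}_\Delta$ fixed by all such automorphisms actually lands in $K^{\mathcal D}\gen{c}_\Delta$ rather than just in its $\Delta$-algebraic closure; for this second point I would invoke Fact~\ref{ConstFact}(2) to transfer the question into $(\mathcal U^{\mathcal D},\Delta)\models DCF_{0,m-r}$, where $\operatorname{dcl}$ of a differential subfield is itself, and use that $d$ is $\operatorname{dcl}$-over $K^{\mathcal D}\gen{c}_\Delta$ because $c=f_p(\alpha)$ is a canonical parameter, so $\alpha$ and hence $d=g(\alpha)$ are definable from $c$ over $K$ once the type is pinned down. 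This is exactly the parameterized-PDE analogue of the argument in \cite[\S3]{MoshePillay}, so I expect it to go through with the substitutions ``$\mathcal U^{\mathcal D}$ for the constants'' and ``$DCF_{0,m-r}$ for $ACF_0$''.
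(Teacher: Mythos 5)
Your easy inclusion is fine (and your justification that $K\gen{\alpha}_\Pi=K\gen{\alpha}_\Delta$ via $\nabla_\DD\alpha=a\cdot s_0(\alpha)$ is the right reason that $c=f_p(\alpha)$ lands in $K\gen{\alpha}_\Delta$). But the automorphism argument that forms the main body of your reverse inclusion contains a genuine non sequitur. You work with $\sigma\in\operatorname{Aut}_\Pi(\mathcal U/K\gen{\mathcal U^{\mathcal D}}_\Delta)$ and conclude that every such $\sigma$ fixes $d$; this is vacuously true, since $d\in\mathcal U^{\mathcal D}\subseteq K\gen{\mathcal U^{\mathcal D}}_\Delta$, so it carries no information. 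The step ``combining, $d$ is fixed by every element of $\operatorname{Aut}_\Delta(\mathcal U^{\mathcal D}/K^{\mathcal D}\gen{c}_\Delta)$'' does not follow: such automorphisms fix only the small field $K^{\mathcal D}\gen{c}_\Delta$ and do not arise as restrictions of automorphisms over $K\gen{\mathcal U^{\mathcal D}}_\Delta$ (those restrict to the identity on $\mathcal U^{\mathcal D}$). Relatedly, ``fixing $K^{\mathcal D}\gen{c}_\Delta$ pointwise'' is not equivalent to ``fixing $K\gen{\mathcal U^{\mathcal D}}_\Delta$ pointwise,'' and $\alpha$ is \emph{not} definable from $c$ over $K$ (the isolated type has many realizations); only $d=g(\alpha)$ is.

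The argument that actually works --- and it is the paper's proof, which you half-state in your final paragraph --- bypasses automorphisms entirely. Since $d=g(\alpha)$ is a tuple of $\mathcal D$-constants, the formula ``$d=g(x)$'' has its parameters in $K\gen{\mathcal U^{\mathcal D}}_\Delta$ and belongs to $tp(\alpha/K\gen{\mathcal U^{\mathcal D}}_\Delta)$, which is isolated by $\phi_p(x,c)$. Hence $\mathcal U\models\forall x\left(\phi_p(x,c)\rightarrow d=g(x)\right)$, so $d$ is the unique value of $g$ on the nonempty set defined by $\phi_p(x,c)$ and therefore $d\in\operatorname{dcl}(K\cup\{c\})=K\gen{c}_\Delta$. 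Fact \ref{ConstFact}(2) then transfers this into $(\mathcal U^{\mathcal D},\Delta)$: the singleton $\{d\}$, being a $K\cup\{c\}$-definable subset of a Cartesian power of $\mathcal U^{\mathcal D}$, is definable over $K^{\mathcal D}\cup\{c\}$ in $(\mathcal U^{\mathcal D},\Delta)$, whence $d\in K^{\mathcal D}\gen{c}_\Delta$ because definable closure of a $\Delta$-subfield in $DCF_{0,m-r}$ is the field itself. If you replace the automorphism discussion with this short computation, your proof coincides with the paper's.
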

\begin{proof}
Since $c$ is a tuple from $(K\gen{\alpha}_{\Delta})^{\mathcal{D}}$ all we really have to show is that $(K\gen{\alpha}_{\Delta})^{\mathcal{D}}\subseteq K^{\mathcal{D}}\gen{c}_{\Delta}$. So let $\beta\in (K\gen{\alpha}_{\Delta})^{\mathcal{D}}$, say $\beta=g(\alpha)$ for some $K$-definable partial function $g$. But $``\beta=g(x)"\in tp(\alpha/K\gen {\mathcal{U}^{\mathcal{D}}}_\D)$ and so we have $\mathcal U \models \forall x(\phi_p(x,c)\rightarrow \beta=g(x))$. Hence $\beta$ is a tuple from $K\gen{c}_\D$. Using Fact \ref{ConstFact}, we see that $\beta$ is in fact a tuple from $K^{\mathcal{D}}\gen{c}_\D$. The result follows.
\end{proof}

Note that Lemma \ref{Const} indicates that one way to prove that a PSN extension of $K$ for $(\star)$ exists is to find an $\alpha\in\mathcal{Y}$ such that $f_p(\alpha)\in K^{\mathcal{D}}$, where $p=tp(\alpha/K)$. This is precisely what we aim to do under the assumption that the $\D$-field $(K^{\mathcal{D}},\D)$ is existentially closed in $(K,\D)$. First we need to prove that we can find a single $f$ and $\phi$ to do the job of the $f_p$'s and $\phi_p$'s:

\begin{prop}\label{mainPSN}
There is a formula $\phi(x,y)$ over $K$ and $K$-definable function $f:\mathcal{Y}\rightarrow (\mathcal{U}^{\mathcal{D}})^m$ for some $m$ such that for all $\alpha\in\mathcal{Y}$ the formula $\phi(x,f(\alpha))$ isolates $tp(\alpha/K\gen {\mathcal{U}^{\mathcal{D}}}_\D)$.
\end{prop}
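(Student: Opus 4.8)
The plan is to obtain the single pair $(\phi,f)$ by a compactness argument that glues together the family $\{(\phi_p,f_p)\}_p$ provided by Lemma~\ref{onty}(2), using the fact that $\mathcal Y$ is a $K$-definable set. First I would invoke Theorem~\ref{QEthm} (or rather the quantifier elimination for $DCF_{0,m}$ together with Lemma~\ref{ongalgro}) to fix notation: $\mathcal Y = tp(\alpha/K)^{\mathcal U}$ is $K$-definable, so the space of complete types over $K$ containing ``$x\in\mathcal Y$'' is a closed subset $S$ of the type space $S_x(K)$. For each $p\in S$, Lemma~\ref{onty}(2) hands us a formula $\phi_p(x,y)$ and a $K$-definable partial function $f_p$ such that $\phi_p(x,f_p(\alpha))$ isolates $tp(\alpha/K\gen{\mathcal U^\DD}_\D)$ for every $\alpha\models p$; I would package the statement ``$\phi_p(x,f_p(x))$ holds, $f_p(x)$ is a tuple of $\DD$-constants, and $\phi_p(x,f_p(x))$ generates a complete type over $K\gen{\mathcal U^\DD}_\D$'' as a partial type (the last clause being expressible as a scheme of formulas over $K$, or more cheaply as ``$\phi_p(x,f_p(x))$ implies the generator of $p$ and is consistent'' — I'd spell this out carefully). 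Each $p\in S$ satisfies this partial type, so by compactness each $p\in S$ is contained in the open set of types satisfying $\psi_p(x) := $ ``$x\in\mathcal Y \wedge \phi_p(x,f_p(x))$ isolates $tp(x/K\gen{\mathcal U^\DD}_\D)$'' (suitably formalized); these open sets cover the compact space $S$, so finitely many $\psi_{p_1},\dots,\psi_{p_k}$ cover $S$, i.e.\ they cover $\mathcal Y$.

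Next I would assemble the finitely many pairs $(\phi_{p_1},f_{p_1}),\dots,(\phi_{p_k},f_{p_k})$ into one. The $K$-definable sets $\mathcal Y_j = \{\alpha\in\mathcal Y : \psi_{p_j}(\alpha)\}$ cover $\mathcal Y$; replacing $\mathcal Y_j$ by $\mathcal Y_j\setminus(\mathcal Y_1\cup\dots\cup\mathcal Y_{j-1})$ I may assume they are a $K$-definable partition of $\mathcal Y$. Define $f$ on $\mathcal Y$ by $f(\alpha) = f_{p_j}(\alpha)$ when $\alpha\in\mathcal Y_j$; padding all the $f_{p_j}$ with dummy coordinates to a common length $m$, this is a single $K$-definable function $\mathcal Y\to(\mathcal U^\DD)^m$. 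Define $\phi(x,y)$ to be $\bigvee_{j=1}^{k}\big(\chi_j(x)\wedge\phi_{p_j}(x,y)\big)$, where $\chi_j$ is a $K$-formula defining $\mathcal Y_j$. Then for $\alpha\in\mathcal Y$, say $\alpha\in\mathcal Y_j$, one checks $\phi(x,f(\alpha))$ is equivalent over $K\gen{\alpha}_\D$ (hence over $K\gen{\mathcal U^\DD}_\D$, since $\chi_j(\alpha)$ holds and $\chi_j$ is over $K$) to $\chi_j(x)\wedge\phi_{p_j}(x,f_{p_j}(\alpha))$, which isolates $tp(\alpha/K\gen{\mathcal U^\DD}_\D)$ because the $\phi_{p_j}$-part does and $\chi_j(x)$ is already implied by $tp(\alpha/K)$ — a small point to verify is that adjoining $\chi_j(x)$ does not destroy the isolation, which holds since $tp(\alpha/K\gen{\mathcal U^\DD}_\D)$ extends $tp(\alpha/K)\ni\chi_j$.

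The main obstacle I expect is the compactness step: one must phrase ``$\phi_p(x,f_p(x))$ isolates $tp(x/K\gen{\mathcal U^\DD}_\D)$'' in a way that is visibly type-definable (a conjunction of first-order conditions over $K$) so that the covering argument on the compact space $S$ goes through. The natural way is to observe that, by Lemma~\ref{onty}(1) together with the uniform isolation of types over $K\gen{\mathcal U^\DD}_\D$ of elements of $\mathcal Y$ (which are in a $\Pi$-closure of $K\gen{\mathcal U^\DD}_\D$), there is already a uniform bound, and that ``isolates'' amounts to: $\phi_p(x,f_p(x))$ is consistent, lies in $tp(x/K\gen{\mathcal U^\DD}_\D)$, and is complete — the completeness being the delicate clause. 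An alternative, perhaps cleaner, route that sidesteps this is to use Corollary~\ref{Interpretation}: the structure $(\mathcal U^\DD,\mathcal Y)_{D,K}$ is interpretable in $DCF_{0,m-r}$, and in that structure $\mathcal Y$ is a definable set over which, by quantifier elimination (Remark~\ref{propi}(2)) and $\omega$-stability, types are uniformly definable; transporting this uniform definability back through the interpretation yields a single $\phi$ and $f$ directly. I would present the compactness argument as the main line and remark that Corollary~\ref{Interpretation} gives the uniformity more conceptually.
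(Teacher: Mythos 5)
Your strategy is genuinely different from the paper's, and it has a gap at exactly the point you yourself flag as delicate: the compactness step does not go through. The condition ``$\phi_p(x,f_p(x))$ isolates $tp(x/K\gen {\mathcal{U}^{\mathcal{D}}}_\D)$'' is naturally an infinite conjunction of first-order conditions over $K$ (one for each $K$-formula $\psi(x,z)$, saying that for every tuple $c$ of $\DD$-constants $\phi_p(x,f_p(x))$ decides $\psi(x,c)$), i.e.\ it is type-definable and hence \emph{closed} in the type space $S$; it is the negation (``some $\psi$ and $c$ witness incompleteness'') that is open. You write that you need the condition to be visibly type-definable ``so that the covering argument on the compact space $S$ goes through,'' but this is backwards: a cover of a compact space by closed sets need not admit a finite subcover. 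To extract finitely many pairs $(\phi_{p_j},f_{p_j})$ you would need each $p$ to lie in the \emph{interior} of the set where $(\phi_p,f_p)$ works, i.e.\ a single formula $\chi_p\in p$ forcing the isolation property, and nothing in Lemma \ref{onty} provides that. Nor can you shortcut by showing $S$ is finite: $\mathcal Y=b\cdot G(\mathcal U^\DD)$ contains points whose type over $K$ is not isolated (only types over $K\gen {\mathcal{U}^{\mathcal{D}}}_\D$ are isolated, by Lemma \ref{onty}(1)), so $S$ is in general infinite. Also note a small misstatement at the start: $\mathcal Y$ is not $tp(\alpha/K)^{\mathcal U}$; it is the solution set of $(\star)$ and typically supports many types over $K$. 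The alternative route you sketch via Corollary \ref{Interpretation} and ``uniform definability of types'' is too vague to certify: definability of types is a formula-by-formula statement and does not by itself produce a single isolating formula uniformly in $\alpha$.

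The missing idea, which is how the paper proceeds, is to use the group structure rather than to glue local data. Since any two points of $\mathcal Y$ differ by right multiplication by an element of $G(\mathcal U^\DD)\subseteq K\gen {\mathcal{U}^{\mathcal{D}}}_\D$, for $\sigma\in Aut(\mathcal Y/K\gen {\mathcal{U}^{\mathcal{D}}}_\D)$ the element $\sigma(\alpha)\cdot\alpha^{-1}$ does not depend on $\alpha$; hence the set of realizations of $tp(\alpha/K\gen {\mathcal{U}^{\mathcal{D}}}_\D)$ is $H^+\cdot\alpha$ for a single subgroup $H^+$ of $G$, which is definable over $K\gen{f_p(\alpha)}_\D$ using $\phi_p$ and therefore, being independent of $\alpha$, is definable over $K$. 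Elimination of imaginaries applied to the coset space $\mathcal Y/H^+$ then yields one $K$-definable $f$, which lands in $(\mathcal U^\DD)^m$ because the cosets are fixed by $Aut(\mathcal U/K\gen {\mathcal{U}^{\mathcal{D}}}_\D)$, and one may take $\phi(x,y)$ to be ``$f(x)=y$''. The uniformity you are after is thus a consequence of $\mathcal Y$ being a torsor under a single definable group, not of a finiteness or compactness phenomenon; I would rework the proof along these lines.
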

\begin{proof} We prove this by means of two claims. We follow closely the strategy of \cite[\S3]{MoshePillay}.

\vspace{.05in}
\noindent {\bf Claim 1.} Let $\alpha\in\mathcal{Y}$ and $\sigma\in Aut(\mathcal{Y}/K\gen {\mathcal{U}^{\mathcal{D}}}_\D)$. Then $\sigma(\alpha)\cdot\alpha^{-1}$ does not depend on $\alpha$.\\
{\it Proof of claim.} If $\beta$ is another element of $\mathcal{Y}$, then $\alpha=\beta\cdot c$ for some $c\in G(\mathcal{U}^{\mathcal{D}})$. So from $\sigma(\alpha)=\sigma(\beta)\cdot c$ and $c=\beta^{-1}\cdot\alpha$, we have that $\sigma(\alpha)\cdot\alpha^{-1}=\sigma(\beta)\cdot\beta^{-1}$.

\vspace{.05in}
\noindent {\bf Claim 2:} The map $\rho:Aut(\mathcal{Y}/K\gen {\mathcal{U}^{\mathcal{D}}}_\D)\rightarrow G$ taking $\sigma$ to $\sigma(\alpha)\cdot\alpha^{-1}$ (for any $\alpha\in \mathcal{Y}$) is an isomorphism between $Aut(\mathcal{Y}/K\gen {\mathcal{U}^{\mathcal{D}}}_\D)$ and a $K$-definable subgroup $H^+$ of $G$.\\
{\it Proof of claim.} For any $\alpha\in \mathcal{Y}$, letting $p=tp(\alpha/K)$, $H^+$ is definable over $K\gen{f_p(\alpha)}_{\Delta}$ as 
$$\{x\cdot\alpha^{-1}:\;  \mathcal U\models\phi_p(x,f_p(\alpha))\}.$$
Since $H^+$ does not depend on the choice of $\alpha$ it is defined over $K$.

To finish the prove, let $Z$ be the set $\mathcal{Y}/H^+$ of right cosets of $H^+$ in $\mathcal{Y}$. By elimination of imaginaries we have a $K$-definable $f:\mathcal{Y}\rightarrow \mathcal{U}^m$ (for some $m$) such that $Z$ can be considered as the definable set $f(\mathcal{Y})$. Moreover, as $Z$ is fixed pointwise by $Aut(\mathcal{U}/K\gen {\mathcal{U}^{\mathcal{D}}}_\D)$, we have that our $K$-definable function is in fact a map from  $\mathcal{Y}$ to $(\mathcal{U}^{\mathcal{D}})^m$.
\end{proof}

Recall that a $\D$-subfield $F$ of $K$ is said to be \emph{existentially closed in} $(K,\D)$ if every quantifier-free formula in the language of $\D$-rings over $F$ with a solution in $K$ has a solution in $F$. It is easy to see that $(F,\D)$ is existentially closed in $(K,\D)$ if and only if any $\D$-algebraic variety over $F$ with a $K$-point has an $F$-point. 

We are now ready to prove:

\begin{thm}
Suppose $(K^{\mathcal{D}},\D)$ is existentially closed in $(K,\D)$. Then there is a parameterized strongly normal extension $L$ of $K$ for the equation $(\star)$.
\end{thm}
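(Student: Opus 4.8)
The plan is to exploit the apparatus built in the previous sections, in particular Corollary~\ref{Interpretation} and Proposition~\ref{mainPSN}, to reduce the existence question to a single $\D$-algebraic variety over $K^\DD$ that we know has a $K$-point, and then invoke the hypothesis that $(K^\DD,\D)$ is existentially closed in $(K,\D)$. Concretely, fix the $K$-definable function $f\colon\mathcal Y\to(\mathcal U^\DD)^m$ and formula $\phi(x,y)$ supplied by Proposition~\ref{mainPSN}, so that for every $\alpha\in\mathcal Y$ the formula $\phi(x,f(\alpha))$ isolates $tp(\alpha/K\langle\mathcal U^\DD\rangle_\D)$. By Lemma~\ref{Const}, for any $\alpha\in\mathcal Y$ with $c=f(\alpha)$ we have $(K\langle\alpha\rangle_\D)^\DD=K^\DD\langle c\rangle_\D$; hence to produce a PSN extension it is enough to find $\alpha\in\mathcal Y$ with $f(\alpha)\in K^\DD$ (then $L:=K\langle\alpha\rangle_\D$ has $L^\DD=K^\DD$ and is $\D$-generated by a solution of $(\star)$, as required).

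First I would record that the image $f(\mathcal Y)\subseteq(\mathcal U^\DD)^m$ is a $K$-definable subset of a Cartesian power of $\mathcal U^\DD$, hence by Fact~\ref{ConstFact}(2) it is a $K^\DD$-definable set in the structure $(\mathcal U^\DD,\D)$; since $(\mathcal U^\DD,\D)$ is a model of $DCF_{0,m-r}$, we may, after shrinking by quantifier elimination, take $f(\mathcal Y)$ (or a relevant $\D$-locally closed piece of it on which the relevant constructions are uniform) to be $\D$-constructible over $K^\DD$. Pick any $\alpha_0\in\mathcal Y$ — one exists because $a$ is an integrable $K$-point, so $\ell_0(y)=a$ has a solution in $G(\mathcal U)$ — and set $c_0=f(\alpha_0)\in(\mathcal U^\DD)^m$. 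Then $c_0$ is a $K$-point (indeed a $K^\DD\langle\alpha_0\rangle$-point, but the key point is it lies over the base) of the $\D$-constructible set $f(\mathcal Y)$ which is defined over $K^\DD$. More carefully: $c_0\in f(\mathcal Y)$, and $f(\mathcal Y)$ is an $F$-definable set for $F=K^\DD$; so the $\D$-algebraic variety $\overline{f(\mathcal Y)}$ (its $\D$-closure, still over $K^\DD$) has the $K$-point $c_0$, hence by existential closedness of $(K^\DD,\D)$ in $(K,\D)$ it has a point $c_1\in(K^\DD)^m$. To conclude I need $c_1$ to actually lie in $f(\mathcal Y)$, not merely in its closure; this is handled by writing $f(\mathcal Y)$ as a Boolean combination of $\D$-varieties over $K^\DD$ and applying the existential-closedness criterion to the quantifier-free formula ``$x\in f(\mathcal Y)$'' directly (using the characterization, stated just before the theorem, that $(F,\D)$ existentially closed in $(K,\D)$ means any quantifier-free $F$-formula with a $K$-solution has an $F$-solution). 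So there is $c_1\in(K^\DD)^m\cap f(\mathcal Y)$.

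Now take $\alpha\in\mathcal Y$ with $f(\alpha)=c_1$; such $\alpha$ exists since $c_1\in f(\mathcal Y)$. By Proposition~\ref{mainPSN}, $\phi(x,c_1)$ isolates $tp(\alpha/K\langle\mathcal U^\DD\rangle_\D)$, and by Lemma~\ref{Const}, $(K\langle\alpha\rangle_\D)^\DD=K^\DD\langle c_1\rangle_\D=K^\DD$ because $c_1$ is already a tuple from $K^\DD$. Setting $L=K\langle\alpha\rangle_\D$, condition (1) of the definition of PSN extension holds because $\alpha$ solves $(\star)$, and condition (2) holds because $L^\DD=K^\DD$. Hence $L$ is a parameterized strongly normal extension of $K$ for $(\star)$.

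The main obstacle I anticipate is the bookkeeping around ``$f(\mathcal Y)$ is $\D$-constructible over $K^\DD$'' and the passage from a point of the $\D$-closure to a genuine point of $f(\mathcal Y)$: one must be careful that $f$ is only a partial $K$-definable function, that its image is definable but a priori only a Boolean combination of $\D$-subvarieties, and that the translation of the hypothesis into ``quantifier-free formula with a $K$-solution'' uses exactly the reformulation given in the text. Everything else — the existence of $\alpha_0$, the descent of the parameter, the final verification of the two PSN axioms — is immediate from Fact~\ref{ConstFact}, Proposition~\ref{mainPSN}, and Lemma~\ref{Const}.
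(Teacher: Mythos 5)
There is a genuine gap at the heart of your argument: you never actually produce a point of $K$ to which the hypothesis ``$(K^{\mathcal{D}},\Delta)$ is existentially closed in $(K,\Delta)$'' can be applied. You take an arbitrary $\alpha_0\in\mathcal{Y}$ and declare $c_0=f(\alpha_0)$ to be ``a $K$-point'' of $f(\mathcal{Y})$, but $\alpha_0$ is merely a tuple from $\mathcal{U}$ (the equation $(\star)$ need not have any solutions in $G(K)$ --- if it did, the whole existence problem would be easy), so $c_0$ is only a tuple from $\mathcal{U}^{\mathcal{D}}$, not from $K$. Your parenthetical ``indeed a $K^{\mathcal{D}}\langle\alpha_0\rangle$-point, but the key point is it lies over the base'' does not repair this: existential closedness of $(K^{\mathcal{D}},\Delta)$ in $(K,\Delta)$ only lets you descend solutions that already lie in $K$, and note moreover that $K\cap\mathcal{U}^{\mathcal{D}}=K^{\mathcal{D}}$, so asking for a $K$-point of $f(\mathcal{Y})\subseteq(\mathcal{U}^{\mathcal{D}})^m$ is literally the same as asking for the $K^{\mathcal{D}}$-point you are trying to construct. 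Your argument is circular at exactly the step where the hypothesis is invoked. (Your worry about passing from the $\Delta$-closure of $f(\mathcal Y)$ back to $f(\mathcal Y)$ itself is a red herring; existential closedness applies to arbitrary quantifier-free formulas, hence to constructible sets.)

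This is precisely why the paper routes the argument through Corollary \ref{Interpretation}, which you mention in your opening sentence but never use. The interpretation replaces $f\colon\mathcal{Y}\to(\mathcal{U}^{\mathcal{D}})^m$ by a function $F\colon G\to\mathcal{U}^m$ that is $K$-definable in $(\mathcal{U},\Delta)$, and replaces the $K^{\mathcal{D}}$-formula $\chi$ defining $\operatorname{Im}(f)$ by a $K^{\mathcal{D}}$-formula $\Gamma$ defining $\operatorname{Im}(F)$ in $(\mathcal{U},\Delta)$. The group $G$, unlike $\mathcal{Y}$, certainly has a $K$-point, namely the identity $e$; hence $F(e)\in K^m$ is a genuine $K$-point of the $K^{\mathcal{D}}$-definable set $\Gamma$, and \emph{now} existential closedness yields $a\in(K^{\mathcal{D}})^m$ satisfying $\Gamma$. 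One then needs the elementarity $(\mathcal{U}^{\mathcal{D}},\mathcal{Y})_{D,K}\prec(\mathcal{U},G)_{D,K}$ to transfer the true sentence ``$\exists x\,(F(x)=a)$'' back to the small structure and obtain $\alpha\in\mathcal{Y}$ (not merely $\alpha\in G$) with $f(\alpha)=a\in(K^{\mathcal{D}})^m$. From that point on your conclusion via Lemma \ref{Const} is correct. So the skeleton of your reduction (find $\alpha\in\mathcal{Y}$ with $f(\alpha)\in K^{\mathcal{D}}$, then apply Lemma \ref{Const}) matches the paper, but the step that actually uses the hypothesis is missing and cannot be carried out without the interpretation machinery.
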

\begin{proof}

%\noindent VERSION 1

The function $f$ given in Proposition \ref{mainPSN} and its image $Im(f)$ are $\emptyset$-definable in the structure $(\mathcal{U}^{\mathcal{D}},\mathcal{Y})_{L_{D,K}}$ introduced in Section 4. Moreover, Fact \ref{ConstFact} tell us that $Im(f)$ is definable over $K^{\mathcal{D}}$, by a formula $\chi(y)$ say.

We use the interpretation result (Corollary \ref{Interpretation}) in two ways. First we take $F$ to be the interpretation of $f$ in $(\mathcal{U},G)$. So $F$ is a function from $G$ to $\mathcal{U}^m$ definable over $K$ in the language of $\D$-rings; i.e., $K$-definable in the structure $(\mathcal{U},\D)$. We also take $\Gamma$ to be the interpretation of $\chi$ in $(\mathcal{U},G)$. We have that $\Gamma(y)$ defines $Im(F)$ and is also definable over $K^{\mathcal{D}}$ in $(\mathcal{U},\D)$.

Now, as the identity element $e$ of $G$ is a $K$-point, we have that $F(e)\in K^m$ and so $Im(F)$ has a point in $K$. In other words, $\Gamma(y)$ is realized in $K$. By the assumption that $(K^{\mathcal{D}},\Delta)$ is existentially closed in $(K,\Delta)$, $\Gamma(y)$ is realized in $K^{\mathcal{D}}$, say by $a$. So ``there is an $x$ such that $F(x)=a$" is true in the structure  $(\mathcal{U},G)_{D,K}$. Because $(\mathcal{U}^{\mathcal{D}},\mathcal{Y})_{D,K}$ is an elementary substructure of $(\mathcal{U},G)_{D,K}$ and $a\in K^{\mathcal{D}}$, one can find such an $x$ in $\mathcal{Y}$.

So  there is an $\alpha\in\mathcal{Y}$ such that $f(\alpha)\in (K^{\mathcal{D}})^m$. By Lemma \ref{Const},  $(K\gen{\alpha}_{\Delta})^{\mathcal{D}}=K^{\mathcal{D}}\gen{f(\alpha)}_{\Delta}=K^{\mathcal{D}}$. Hence $L=K\gen{\alpha}_{\Delta}$ is a parametrized strongly normal extension of $K$ for $(\star)$.\\

\end{proof}

%----------------------------------------------------------------------------------------------------------------------------------------

\end{document}